\documentclass[leqno,12pt]{amsart}

\usepackage[top=1.5in,bottom=1.3in,left=1.1in,right=1.1in,marginparwidth=1.5cm]{geometry}
\usepackage{times}
\usepackage{amssymb}
\usepackage{microtype}
\usepackage{comment}
\usepackage[english]{babel}
\usepackage[colorlinks=true,citecolor=blue,urlcolor=blue,linkcolor=blue,pagebackref]{hyperref}
\usepackage[capitalise,nameinlink]{cleveref}
\usepackage{tikz-cd}

\newcommand{\C}{\mathbb C}
\newcommand{\Q}{\mathbb Q}
\newcommand{\R}{\mathbb R}
\newcommand{\Z}{\mathbb Z}
\renewcommand{\H}{\operatorname H}
\renewcommand{\P}{\mathbb P}
\newcommand{\cA}{\mathcal A}
\newcommand{\cO}{\mathcal O}
\newcommand{\cL}{\mathcal L}

\newcommand{\vol}{\operatorname{vol}}
\newcommand{\codim}{\operatorname{codim}}
\newcommand{\BG}{\operatorname{BG}}
\newcommand{\Sym}{\operatorname{Sym}}
\newcommand{\IC}{\operatorname{IC}}

\newtheorem{Thm}{Theorem}[section]
\newtheorem{Prop}[Thm]{Proposition}
\newtheorem{Lem}[Thm]{Lemma}
\newtheorem{Cor}[Thm]{Corollary}
\newtheorem{Ques}{Question}

\theoremstyle{definition}
\newtheorem{Def}[Thm]{Definition}
\newtheorem{Rem}[Thm]{Remark}

\crefname{Thm}{Theorem}{Theorems}
\crefname{Prop}{Proposition}{Propositions}
\crefname{Lem}{Lemma}{Lemmas}
\crefname{Cor}{Corollary}{Corollaries}
\crefname{Def}{definition}{definitions}
\crefname{Rem}{Remark}{Remarks}
\crefname{Ques}{Question}{Questions}
\crefname{Prob}{Problem}{Problems}
\crefname{Ex}{Example}{Examples}

\author[\'Angel David R\'ios Ortiz]{\'Angel David R\'ios Ortiz}
\address{Instituto de Matem\'aticas,
Universidad Nacional Aut\'onoma de M\'exico,\newline
Unidad Cuernavaca,
Av. Universidad s/n,
Col. Lomas de Chamilpa,
62210 Cuernavaca,
Morelos,
M\'exico}
\email{david.rios@im.unam.mx}
\title[Bogomolov--Guan manifolds are not formal]
{Bogomolov--Guan manifolds are not formal}

\begin{document}
\begin{abstract}
Bogomolov--Guan manifolds form the only known series of compact simply connected non-K\"ahler holomorphic symplectic manifolds. We prove that all Bogomolov--Guan manifolds of complex dimension at least four are nonformal over $\Q$.

The proof constructs a uniform degree-eight $a$-Massey product in a finite-dimensional invariant de Rham model of the symmetric quotient of the Bogomolov--Guan precursor. Its indeterminacy vanishes, and its nontriviality is detected by an explicit top-degree pairing arising from an $S_{n+1}$-invariant tensor contraction. A nonzero-degree map then transfers nonformality to the smooth Bogomolov--Guan manifold.

We also compare the Bogomolov--Guan precursor with the nilmanifold appearing in Guan's original construction, showing that they are related by a natural $S_{n+1}$-equivariant finite cover of degree $(n+1)^2$ and have the same invariant de Rham model. Using Guan's resolution, which we prove to be projective and semismall, together with Saito's decomposition theorem, we further show that the third Betti number of every Bogomolov--Guan manifold of complex dimension at least four is equal to $3$.
\end{abstract}
\maketitle

\section{Introduction}\label{sec:introduction}

One of the fundamental results of Deligne--Griffiths--Morgan--Sullivan is that every compact K\"ahler manifold is formal \cite{DGMS1975}. In the language of Sullivan's rational homotopy theory \cite{Sullivan1977}, this means that its de Rham algebra is connected to its cohomology algebra, endowed with the zero differential, by a chain of quasi-isomorphisms. Thus a substantial part of the rational homotopy type of a compact K\"ahler manifold is already encoded in its cohomology ring. Formality is therefore a strong topological shadow of K\"ahler geometry: it is a property of the underlying smooth manifold and is independent of the particular complex structure used to exhibit it as K\"ahler.

The interaction between formality, simple connectivity, and symplectic geometry is especially striking in low dimensions. A theorem of Miller \cite{Miller1979} asserts more generally that a $(k-1)$-connected compact manifold of dimension at most $4k-2$ is formal. In particular, every simply connected compact manifold of real dimension at most six is formal. Nonformal simply connected manifolds therefore first appear in real dimension seven. In the symplectic category the dimension is necessarily even, so real dimension eight is the first dimension in which one can hope to find a simply connected compact symplectic manifold which is not formal. Fern\'andez and Mu\~noz achieved precisely this in \cite{FernandezMunoz2008}, producing a nonformal simply connected compact symplectic eight-manifold. Their construction was motivated in part by the topology of Guan's non-K\"ahler holomorphic symplectic examples.

Bogomolov--Guan manifolds place this problem in a substantially more rigid geometric setting. Guan's constructions \cite{GuanII1995,GuanIII1995}, together with Bogomolov's geometric reinterpretation using primary Kodaira surfaces \cite{Bogomolov1996}, produce simply connected compact non-K\"ahler holomorphic symplectic manifolds in every even complex dimension at least four. Despite their non-K\"ahler nature, several aspects of their geometry parallel the hyper-K\"ahler case: their deformation theory satisfies a local Torelli theorem, and their second cohomology carries a Beauville--Bogomolov--Fujiki-type quadratic form satisfying a Fujiki relation \cite{KurnosovVerbitsky2019}. Further geometric and topological properties have been developed in \cite{BKKY2022,AKV2026}. It is therefore natural to ask whether formality, which is automatic in the compact K\"ahler case, survives in this non-K\"ahler holomorphic symplectic setting.

A recent paper of Guan \cite{Guan2025} proposes a relative Massey-product argument in complex dimension four and a higher-dimensional extension. The proof given here is independent of those calculations. In fact, the higher-dimensional defining system written there vanishes for dimensional reasons once $n>2$; see \cref{rem:central-input}. Our main result establishes nonformality uniformly in every dimension and for every deformation class arising from the Bogomolov--Guan construction.

We now fix the geometric notation. Following Bogomolov \cite{Bogomolov1996}, let $S=\cL^{\times}/\langle\lambda\rangle$ be a primary Kodaira surface, where $\cL$ is a line bundle of nonzero degree $d$ on an elliptic curve $E$ and $|\lambda|>1$. Its elliptic fiber is $F=\C^{\times}/\langle\lambda\rangle$. Put $m=n+1$ and assume that $m\mid d$; write $d=mt$ with $t\in\Z\setminus\{0\}$. Consider
\[
W=\left(S^{[m]}\longrightarrow \Sym^mE\xrightarrow{\ \Sigma\ }E\right)^{-1}(0).
\]
Bogomolov's construction associates to these data a smooth simply connected compact holomorphic symplectic manifold $Q$ of complex dimension $2n$, together with a finite morphism $Q\to W/F$ of degree $m^2$; see \cite{Bogomolov1996} and \cite[Theorem~3.3]{BKKY2022}.

Alongside the Hilbert-scheme construction we use the product-level space
\[
R=\left\{(s_1,\ldots,s_m)\in S^m:\sum_{i=1}^m\pi(s_i)=0\right\},
\qquad
N_n:=R/F,
\]
where $F$ acts diagonally. In the terminology of Abubakarova--Kuznetsova--Verbitsky \cite[\S1.2]{AKV2026}, $N_n$ is the \emph{restricted Bogomolov--Guan precursor}, or \emph{BG-precursor}. Since no unrestricted precursor occurs below, we call $N_n$ simply the \emph{Bogomolov--Guan precursor}. It is a compact complex nilmanifold \cite[Claim~4.6]{AKV2026}, naturally equipped with an $S_m$-action, and we put
\[
X_n:=N_n/S_m.
\]

There is a second, closely related nilmanifold which must be kept distinct from the precursor. Guan's original construction \cite{GuanII1995,GuanIII1995} uses a nilmanifold $M_{n,t}$ with an $S_m$-action and the quotient
\[
Y_{n,t}:=M_{n,t}/S_m.
\]
The two nilmanifolds are commensurable but their natural integral lattices are different. More precisely, the precursor uses the quotient, or weight, lattice $P_n=\Z^m/\Z(1,\ldots,1)$ in the elliptic-fiber directions, whereas Guan's nilmanifold uses the root lattice $A_n=\{(a_i)\in\Z^m:\sum a_i=0\}$. Since $[P_n:A_n]=m$, the natural $S_m$-equivariant map $M_{n,t}\to N_n$ has degree $m^2$. We prove this comparison carefully in \cref{prop:guan-precursor-cover}. It induces the diagram
\[
\begin{tikzcd}
M_{n,t} \arrow[r] \arrow[d] & N_n \arrow[d] \\
Y_{n,t}=M_{n,t}/S_m \arrow[r] & X_n=N_n/S_m
\end{tikzcd}
\]
with horizontal maps finite of degree $m^2$. The distinction is important: $X_n$ is the quotient on which we construct the nonformality obstruction, while $Y_{n,t}$ is the quotient admitting Guan's degree-one Hilbert-type resolution by the smooth Bogomolov--Guan manifold.

\begin{Def}
A compact complex manifold $X$ of complex dimension $2n$ is called a \emph{Bogomolov--Guan manifold of type $\BG_n$}, or simply a \emph{$\BG_n$-manifold}, if it is deformation equivalent to one of the distinguished manifolds $Q_{n,t}$ arising from the Guan--Bogomolov construction, for some $t\neq0$. We use $\BG_n$ for the collection of deformation classes obtained in this way.
\end{Def}
\begin{Rem}
For $n=1$ the resulting surface is a K3 surface; see \cite[Example~3.5]{BKKY2022}. Thus the first genuinely non-K\"ahler Bogomolov--Guan manifolds occur in complex dimension four.
\end{Rem}

The first and main theorem of this paper is the following.

\begin{Thm}\label{thm:main}
Every $\BG_n$-manifold, for $n\geq2$, is nonformal over $\Q$, $\R$, and $\C$.
\end{Thm}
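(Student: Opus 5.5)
The proof will follow the strategy announced in the abstract. We work on the orbifold quotient $X_n=N_n/S_m$, transfer nonformality to $Q_{n,t}$ by a nonzero-degree map, and extend to the whole deformation class by diffeomorphism invariance.

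\textbf{Step 1: a model for $X_n$.} Since $N_n$ is a nilmanifold, Nomizu's theorem identifies its real cohomology with that of the Chevalley--Eilenberg algebra $\Lambda\mathfrak n^*$ of left-invariant forms. The structure is concrete: the fibration $R\to N_n$ and the primary Kodaira surface give $\mathfrak n$ as a two-step nilpotent Lie algebra. Its generators are $1$-forms $\alpha_i,\bar\alpha_i$ from the $E$-directions, taken modulo $\sum\alpha_i$, and fiber forms $\beta_i,\gamma_i$, taken modulo the diagonal $F$. The differentials are of the type $d\gamma_i=\alpha_i\wedge\bar\alpha_i$, up to the constant $t$. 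The $S_m$-action is by permuting indices. For the finite quotient $X_n$, the subalgebra $(\Lambda\mathfrak n^*)^{S_m}$ is a finite-dimensional CDGA model of the rational homotopy type of $X_n$: averaging is a quasi-isomorphism onto invariants, and rational orbifold quotients behave well. Formality can be tested over $\R$ or $\C$, and the three ground fields are equivalent by the descent theorems of Sullivan, Halperin--Stasheff and DGMS. So we may work with real invariant forms.

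\textbf{Step 2: the obstruction.} In $(\Lambda\mathfrak n^*)^{S_m}$ I would write down low-degree invariant closed classes. One is the symmetric $2$-form $a=\sum_i\alpha_i\wedge\bar\alpha_i$, which is exact, so the classes come from expressions like $\sum\alpha_i\wedge\bar\alpha_i\wedge(\ldots)$. Suitable closed classes $x,y$ satisfy $a\,x=d\eta$ and $a\,y=d\zeta$. From these I form the $a$-Massey product $\langle a;x,y\rangle=[\eta\,y\pm x\,\zeta]$ (the Cavalcanti--Fernández--Muñoz type), arranged to land in degree $8$ uniformly in $n$.

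Two things must then be checked. First, the indeterminacy vanishes: the relevant products $H^{*}\cdot a$ in the degrees involved are zero in invariant cohomology, which is a computation using the $S_m$-invariance. Second, the class is nonzero. For this I would pair it with an explicit closed invariant form $\omega$ of complementary degree $\dim_\R N_n-8$, built as an $S_m$-invariant tensor contraction, for example an elementary symmetric expression in the $\alpha_i\bar\alpha_i\beta_i\gamma_i$. The integral of the product should equal a nonzero multiple of the volume form, and that multiple is a combinatorial constant depending on $m$. By Poincaré duality on the rational homology manifold $X_n$, the class is therefore nonzero. \textbf{This is the main obstacle}: finding a defining system that does not vanish for dimensional reasons when $n>2$, and verifying that the combinatorial constant is nonzero for all $n$. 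I would first test $n=2$ by hand and then look for the pattern that generalizes.

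\textbf{Step 3: transfer to $Q_{n,t}$.} Nonformality of a Poincaré duality space is inherited upward along nonzero-degree maps. If $f\colon Q\to X$ has nonzero degree, then $f^*$ is injective on cohomology by the duality argument. So if $Q$ were formal, $X$ would be formal as well, by the Fernández--Muñoz / Stasheff-type retraction argument. More robustly, the $a$-Massey product pulls back to a nontrivial product with vanishing indeterminacy. I would use the composite
\[
Q_{n,t}\to Y_{n,t}\to X_n
\]
of Guan's degree-one resolution with the degree-$m^2$ map from \cref{prop:guan-precursor-cover}; this composite has nonzero degree. Alternatively I would use Bogomolov's finite map $Q\to W/F$ followed by the Hilbert--Chow-type map to $X_n$.

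\textbf{Step 4: the deformation class.} Formality depends only on the diffeomorphism type. Deformation-equivalent compact complex manifolds are diffeomorphic by Ehresmann's theorem. Hence every $\BG_n$-manifold is nonformal, over $\Q$, $\R$ and $\C$ alike.
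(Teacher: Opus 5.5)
Your overall architecture matches the paper's: invariant model of $X_n$, a degree-eight $a$-Massey product with vanishing indeterminacy, detection by a complementary closed class, a nonzero-degree map, and Ehresmann. But Step~2, which carries essentially all the mathematical content, is never carried out, and what you do say about it points the wrong way.

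\textbf{The structure equations and the central class.} The equations $d\gamma_i=\alpha_i\bar\alpha_i$ hold on $S^m$, not on $N_n=R/F$. Only combinations $\sum_i c_i\gamma_i$ with $\sum_i c_i=0$ are basic for the diagonal $F$-action. This is why the paper works with $y_i=\sum_j\zeta_j-m\zeta_i$, for which $dy_i=b-m\,x_i\bar x_i$. Consequently your central class $\sum_i\alpha_i\bar\alpha_i$ (the paper's $b$) is \emph{not} exact on $N_n$. Upstairs it equals $d(\sum_i\zeta_i)$, but $\sum_i\zeta_i$ does not descend, and in the invariant model $\cA_n^1=0$, so $[b]\neq0$. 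This matters: if the central class really were exact, the product would contain $0$, since it depends only on the classes and for the zero class all primitives may be taken to be $0$.

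\textbf{The shape of the product.} A two-input product $\langle a;x,y\rangle=[\eta y\pm x\zeta]$ is just the triple Massey product $\langle x,a,y\rangle$. Its indeterminacy is $x\cdot\H^*+\H^*\cdot y$, not $\H^*\cdot a$. Reaching degree $8$ with $|a|=2$ forces $|x|+|y|=7$, and you give no candidate classes.

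\textbf{The missing construction.} What is missing is an explicit obstruction that survives in every dimension. The paper keeps the nonexact class $b=B(x,w)$ as central element and uses the three-input product $\langle b;a,b,c\rangle$, with $a=B(x,u)$ and $c=B(w,u)$. Its primitives $\alpha=T_n(x,u,v)$, $\beta=T_n(x,w,v)$, $\gamma=T_n(w,u,v)$ are built from the unique $S_m$-invariant cubic tensor. The indeterminacy is killed by $\H^3(\cA_n)=0$ (\cref{lem:uniform-H3}). Nonvanishing comes from the identity $M_na^{n-2}q^{n-2}=(-1)^n(n+1)^2n!(n-2)!\vol_n$. That identity rests on the contractions $\sum_aC_{aac}=0$ and $\sum C_{abc}^2=n(n-1)/(n+1)$, together with $dE_n^{4n-1}=0$. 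Without a computation of this kind, nothing in your proposal shows that $X_n$ is nonformal.

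\textbf{Step~3.} Your nonzero-degree map through $Y_{n,t}$ is fine. However, the transfer of nonformality is exactly the Milivojevi\'c--Stelzig--Zoller domination theorem. It does not follow from injectivity of $f^*$ by a simple retraction argument, because the Umkehr map is not multiplicative. Your ``more robust'' alternative also fails as stated. On $Q$ the pulled-back product need not have vanishing indeterminacy, since $b_3(Q)=3$ (\cref{thm:intro-b3}), so its nontriviality on $Q$ is not automatic.
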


We also compute the first nontrivial odd Betti number uniformly, yielding our second result.

\begin{Thm}\label{thm:intro-b3}
Let $X$ be a $\BG_n$-manifold with $n\geq2$. Then
\[
b_3(X)=3.
\]
In particular, the underlying smooth manifold of $X$ admits no K\"ahler complex structure and, more generally, no complex structure satisfying the $\partial\bar\partial$-lemma.
\end{Thm}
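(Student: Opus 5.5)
The statement immediately above is \cref{thm:intro-b3}. I will compute $b_3$ on the distinguished model $Q=Q_{n,t}$. Betti numbers are constant in smooth families, and deformation equivalence preserves the underlying diffeomorphism type, so this suffices for every $\BG_n$-manifold. The strategy is to use Guan's degree-one resolution $\rho\colon Q\to Y_{n,t}=M_{n,t}/S_m$. First I will check that $\rho$ is projective and semismall; this is the analogue of the Hilbert--Chow morphism $S^{[m]}\to\Sym^mS$ restricted to $W$. Concretely, the strata of $Y_{n,t}$ are indexed by partitions $\mu$ of $m$. Over the stratum of type $\mu$ the fiber is a product of punctual Hilbert schemes, of dimension $\sum(\mu_i-1)$, while the stratum has codimension $2\sum(\mu_i-1)$. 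This is exactly the semismall equality, and each stratum is relevant. Projectivity comes from a relatively ample line bundle on the Hilbert scheme, given by minus the exceptional divisor.

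Next I apply Saito's decomposition theorem for the projective semismall map $\rho$ of complex spaces, in the form of de Cataldo--Migliorini for semismall maps. It gives
\[
\H^k(Q,\Q)\cong\bigoplus_{\mu}\IH^{k-2\ell(\mu)}\bigl(\overline{Y_\mu},L_\mu\bigr),\qquad \ell(\mu)=\sum(\mu_i-1).
\]
Here $L_\mu$ is the local system of top-dimensional fiber components, which is trivial because punctual Hilbert schemes are irreducible. The closure $\overline{Y_\mu}$ is itself a finite quotient of a lower-dimensional analogue of the nilmanifold. For $k=3$ only two summands can contribute. The partition $\mu=(1^m)$ contributes $\IH^3(Y_{n,t})=\H^3(Y_{n,t},\Q)$, since $Y_{n,t}$ is a rational homology manifold, being a finite quotient of a manifold. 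The partition $\mu=(2,1^{m-2})$ has $\ell=1$ and contributes $\IH^1(\overline{Y_{(2,1,\ldots)}})$. In this way the problem reduces to invariant cohomology of nilmanifolds: $\H^3(M_{n,t})^{S_m}$ and $H^1$ of the diagonal stratum, which is a finite quotient of a nilmanifold of the form $\{(s,s_3,\ldots,s_m)\}/F$, again taken with its symmetric-group invariants.

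These computations will use Nomizu's theorem together with the statement, proved earlier, that $M_{n,t}$ and $N_n$ share the same invariant de Rham model. Rational cohomology of a finite quotient equals the invariant part, so both groups become cohomology of $(\Lambda\mathfrak g^*)^{S_m}$ in low degrees. I will describe $\mathfrak g^*$ from the Kodaira structure. There is a base part $V\otimes\mathfrak h$ in the $E$-directions with $\sum=0$, where $\mathfrak h=\R^{m}/\R$ and $V=\R^2$. There is also a fiber part, the $F$-directions modulo the diagonal, with differential $d\theta_i=\omega_i$ pulled back from the $E$-factors. The $S_m$-invariants of the degree-one part $(\mathfrak h\oplus\ldots)^*$ vanish. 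The degree-two invariants are spanned by $\sum e_i\wedge e_i'$-type forms and by mixed forms. From this one counts
\[
\dim \H^3\bigl((\Lambda\mathfrak g^*)^{S_m}\bigr)=3
\]
and $\H^1$ of the diagonal stratum's invariant model $=0$. The invariant degree-three cocycles should be, roughly, the invariant $\sum\theta_i\wedge\alpha_i\wedge\beta_i$-type classes modulo exact terms, which span three dimensions coming from $\H^1(F)\otimes\H^2$ combinations. Combining the two contributions gives $b_3=3+0=3$.

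The final clause follows directly. Any compact manifold satisfying the $\partial\bar\partial$-lemma has a Hodge decomposition with $h^{p,q}=h^{q,p}$, so its odd Betti numbers are even; since $b_3=3$ is odd, no such complex structure exists, and in particular no Kähler one.

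I expect the main obstacle to be the invariant Lie algebra cohomology count in degree three, and in particular verifying $\H^1=0$ for the strata closures with their residual symmetric groups, uniformly in $n$. The $n=2$ case may have extra coincidences, such as invariant tensors of $S_3$, so I will treat it separately first. I would try to organize the computation with a spectral sequence for the central extension by the fiber torus, and then take $S_m$-invariants of the $E_2$ page, which is built from $\Lambda(V\otimes\mathfrak h)^*$ as a representation of $S_m$.
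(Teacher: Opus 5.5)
Your architecture is the same as the paper's. You use Guan's projective semismall resolution $f\colon Q_{n,t}\to Y_{n,t}$ and Saito's decomposition theorem, and you observe that in degree three only the open stratum and the transposition stratum contribute. This gives $\H^3(Q_{n,t})\cong\H^3(Y_{n,t})\oplus\H^1(Z_{n,t})$, where $Z_{n,t}$ is the normalization of the closure of the transposition stratum.

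The gap is in the two computations meant to finish the proof. Both of your predicted values are false, and they are swapped.

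First, the invariant model has $\H^3=0$, not $3$. The only invariant in $V^{\otimes3}$ is the symmetric tensor $T_n$, so $(\Lambda^3V)^{S_m}=0$ and $(\Lambda^2V\otimes V)^{S_m}=0$. Hence the invariant three-forms are exactly $T_n(x,w,u)$, $\alpha=T_n(x,u,v)$, $\beta=T_n(x,w,v)$ and $\gamma=T_n(w,u,v)$. The last three are precisely the forms you propose as cocycles: a nonclosed fiber one-form contracted with two other families. They are not closed. Their differentials are $d\alpha=ab$, $d\beta=b^2$ and $d\gamma=bc$, which are nonzero and linearly independent. The remaining cocycle is exact, since $T_n(x,w,u)=\tfrac1m\,dr$ with $r=B(u,v)$. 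So $b_3(Y_{n,t})=0$. In fact $\alpha,\beta,\gamma$ are exactly the primitives in the paper's Massey product.

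Second, $\H^1$ of the transposition stratum does not vanish. The residual symmetry on the $\tau$-fixed locus is only $C(\tau)/\langle\tau\rangle\cong S_{m-2}$, not $S_m$. The space $V^\tau\cong\C\oplus V_{m-2}$ contains the $S_{m-2}$-invariant vector $e_0=(m-2,m-2,-2,\ldots,-2)$. Each of the four families therefore gives an invariant one-form $x_0,w_0,u_0,v_0$. The first three are closed, while $dv_0=-m\sum_i(e_0)_ix_iw_i\neq0$. Hence $b_1(Z_{n,t})=3$.

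The correct split is $b_3=0+3$. Your total agrees only by coincidence, and carrying out your plan would contradict both intermediate claims. The vanishing of $V^{S_m}$ that you rely on does not transfer to the stratum.

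You also skip two points the paper needs:
\begin{itemize}
\item The intersection cohomology of the closure must be identified with the ordinary cohomology of its normalization $Z_{n,t}\cong F_\tau^{\mathrm{rel}}/C(\tau)$, which is a rational homology manifold.
\item Only one component of $M_{n,t}^\tau$ may contribute. The paper gets this from $b_2(Q_{n,t})=6$ (Guan) and $b_2(Y_{n,t})=5$. These force $Z_{n,t}$ to be connected, hence equal to $F_{\tau,0}/C(\tau)$.
\end{itemize}

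Your argument for the final clause, that odd $b_3$ is incompatible with the $\partial\bar\partial$-lemma, is fine.
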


\subsection{Strategy of proof}

The proof separates the homotopy-theoretic obstruction from the semismall calculation of $b_3$. The Bogomolov--Guan precursor $N_n$ is a compact complex nilmanifold with an $S_m$-action. By Nomizu's theorem and exactness of finite-group invariants, the symmetric quotient $X_n=N_n/S_m$ is modeled over $\C$ by the finite-dimensional cdga $\cA_n=(E_n,d)^{S_m}$, where $E_n$ is generated in degree one by four copies of the standard representation of $S_m$.

Inside $\cA_n$ we construct the degree-eight $a$-Massey product $\langle b;a,b,c\rangle$. Its indeterminacy vanishes because $\H^3(\cA_n)=0$, and it is represented by $M_n=a\beta\gamma+b\gamma\alpha+c\alpha\beta$. The main algebraic calculation is the uniform identity
\[
M_n a^{n-2}q^{n-2}=(-1)^n(n+1)^2n!(n-2)!\,\vol_n.
\]
Since the top form on the right is not exact, $[M_n]\neq0$ for every $n\geq2$, and hence $X_n$ is nonformal. Bogomolov's Hilbert-scheme construction gives a map of nonzero degree $Q\to X_n$. Since $X_n$ is a rational Poincar\'e duality space, the domination theorem of Milivojevi\'c--Stelzig--Zoller \cite[Theorem~A]{MSZ2023} transfers nonformality to $Q$, and therefore to every manifold deformation equivalent to $Q$.

The computation of $b_3$ uses the other quotient. For $d=mt$, Guan's nilmanifold $M_{n,t}$ is an $m^2$-fold cover of the precursor but has the same $S_m$-equivariant real Lie algebra, hence the same invariant de Rham model after extension of scalars. Consequently
\[
b_2(Y_{n,t})=5,\qquad b_3(Y_{n,t})=0.
\]
Guan's original construction gives a degree-one Hilbert-type resolution $f:Q_{n,t}\to Y_{n,t}$. We prove that it is projective and semismall and then apply Saito's decomposition theorem. In degrees two and three the only additional contribution comes from the codimension-two transposition sector. If $Z_{n,t}$ denotes the normalization of its closure, then
\[
\H^3(Q_{n,t},\C)\cong \H^1(Z_{n,t},\C),
\]
and an explicit fixed-locus calculation gives $b_1(Z_{n,t})=3$.

\subsection{Relation with previous work}

The obstruction used here is the $a$-Massey product introduced by Cavalcanti--Fern\'andez--Mu\~noz \cite{CFM2008}, building on the nonformal symplectic construction of Fern\'andez--Mu\~noz \cite{FernandezMunoz2008}. The formulas proposed in \cite{Guan2025} motivate a related dimension-dependent defining system; as explained in \cref{rem:central-input}, its stated higher-dimensional representative vanishes identically for $n>2$. Our construction instead keeps the central class equal to $b$ in every dimension and detects the resulting degree-eight class by a complementary top-degree pairing.

A second point of the present paper is the separation between the modern Bogomolov--Guan precursor and Guan's original nilmanifold. They have the same $S_m$-equivariant Lie algebra over $\R$ but different natural integral lattices, related by an index-$m$ root-to-weight inclusion in each real elliptic-fiber lattice. The resulting degree-$m^2$ cover explains both why their invariant structure equations agree and why they should not be identified as the same compact nilmanifold. The nonformality argument is naturally carried out on the precursor quotient $X_n$, whereas the semismall resolution used for $b_3$ is naturally a resolution of $Y_{n,t}$.

 \subsection*{Acknowledgements}
Initial attempts to prove some of the results of this paper were carried out in collaboration with Tommaso Sferruzza. I am grateful to him for the discussions and ideas that helped shape the early stages of this project. Over the years, I have also benefited from many conversations on Bogomolov--Guan manifolds and related questions. I would particularly like to thank Giuseppe Barbaro, Filippo Fagioli, Nikon Kurnosov and Vasily Rogov for several useful and stimulating discussions. I also thank Misha Verbitsky, whose comment on looking at the Bogomolov--Guan precursor indirectly helped me find a mistake in the initial computations of this paper, and Nikon, who confirmed that the computation was indeed wrong. Finally, I am especially grateful to Ángel Cano, Pepe Seade and Jawad Snoussi for their support and encouragement, and for making me feel so welcome in Cuernavaca.

\subsection*{AI disclosure}
OpenAI's GPT-5.6 Sol was used as an auxiliary tool in the preparation of this manuscript, primarily for language editing, improvements to the exposition, and assistance in checking and refining some arguments. All mathematical statements, proofs, and conclusions were independently verified by the author, who takes full responsibility for the content of the paper.

\section{Formality, Massey products, and finite quotients}
\label{sec:preliminaries}

Recall that a connected cdga $(A,d)$ over a field of characteristic zero is \emph{formal} if it is connected to $(\H^*(A),0)$ by a zigzag of quasi-isomorphisms. A connected topological space $X$ is called \emph{rationally formal} if its Sullivan algebra $A_{\mathrm{PL}}(X;\Q)$ is formal \cite[\S12]{Sullivan1977}.

For a smooth manifold $M$, the de Rham algebra is a real model of $A_{\mathrm{PL}}(M)$, so rational formality is equivalent to formality of its de Rham algebra after extension of scalars, cf. \cite[Remark~2.86 and Proposition~2.101]{FelixOpreaTanre2008}. Moreover, formality descends along field extensions of characteristic zero \cite[Theorem~12.1]{Sullivan1977}, see also \cite[Remark~3.7]{MSZ2023}. We shall therefore perform all explicit
calculations over $\C$.

\begin{Def}\label{def:a-massey}
Let $(A,d)$ be a cdga and let $b,a_1,a_2,a_3\in A^2$ be closed elements such that each product $ba_i$ is exact. Choose $\xi_i\in A^3$ with $d\xi_i=ba_i$. The \emph{third-order
$a$-Massey product} with central element $b$ is
\begin{equation}\label{eq:a-massey-product}
\langle b;a_1,a_2,a_3\rangle
=
\left\{
\left[
a_1\xi_2\xi_3+
a_2\xi_3\xi_1+
a_3\xi_1\xi_2
\right]:
d\xi_i=ba_i
\right\}
\subset \H^8(A).
\end{equation}
The choice $(\xi_1,\xi_2,\xi_3)$ is called a \emph{defining system},
and the product is \emph{nontrivial} if
$0\notin\langle b;a_1,a_2,a_3\rangle$.
\end{Def}

This is the specialization to degree-two classes of the $a$-Massey product introduced in \cite[Definition~2.3]{CFM2008}. The representative in \eqref{eq:a-massey-product} is closed \cite[Proposition~2.2]{CFM2008}, and the product depends only on the input cohomology classes and is invariant under quasi-isomorphisms \cite[Lemma~2.5, Remark~2.8 and Lemma~2.9]{CFM2008}. Most importantly,
\[
0\notin\langle b;a_1,a_2,a_3\rangle
\quad\Longrightarrow\quad
A\ \text{is nonformal}
\]
by \cite[Theorem~2.10]{CFM2008}.

Thus exhibiting one nonzero representative is not enough: one must also control the indeterminacy and rule out a defining system producing the zero class.

We shall also use the following standard fact about finite quotients. We include the multiplicative statement because the action considered
later has nonisolated fixed loci.

\begin{Lem}\label{lem:finite-quotient}
Let a finite group $G$ act smoothly on a compact connected manifold $N$. Then
\[
\Omega^*(N;\C)^G
\]
is a cdga model of the underlying topological space $N/G$. If $N$ is closed and oriented and the action preserves orientation, then $N/G$ is a rational Poincar\'e duality space of dimension
$\dim_{\R}N$.
\end{Lem}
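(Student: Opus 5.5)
The plan is to compare $\Omega^*(N;\C)^G$ with a model of $N/G$ by way of cohomology with coefficients, and then to obtain Poincar\'e duality from the transfer. Write $p:N\to N/G$ for the quotient map and put $X=N/G$.

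\emph{Step 1: cohomology.} The classical transfer argument gives
\[
\H^*(N/G;\C)\cong \H^*(N;\C)^G .
\]
One way to see this is to note that $p$ is a finite map, so $p_*\underline{\C}_N$ is a sheaf on $X$ carrying a $G$-action. Taking invariants, $(p_*\underline{\C}_N)^G=\underline{\C}_X$, because over each orbit the invariant locally constant functions are the constants. Taking $G$-invariants is exact over characteristic zero, and $p_*$ is exact since $p$ is finite. Therefore
\[
\H^*(X,\underline{\C}_X)=\H^*(X,(p_*\underline{\C}_N)^G)=\H^*(N,\C)^G .
\]
Since averaging over $G$ commutes with $d$, we also get $\H^*(\Omega(N)^G)=\H^*(\Omega(N))^G$. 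Combining this with de Rham's theorem identifies the cohomology of the invariant forms with $\H^*(X;\C)$.

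\emph{Step 2: multiplicative model.} To promote the comparison to a zigzag of cdga quasi-isomorphisms, I would use Sullivan's simplicial forms together with a $G$-equivariant smooth triangulation of $N$. Such a triangulation exists by Illman's equivariant triangulation theorem, and after subdividing it induces a triangulation of $X$ with $X=|K|/G$, in which $G$ acts on each fixed simplex trivially. Consider the chain of $G$-equivariant quasi-isomorphisms
\[
\Omega^*(N)\to \Omega^*_{\mathrm{PL}}(K)\otimes\C \leftarrow A_{\mathrm{PL}}(N)\otimes\C,
\]
where the middle term denotes piecewise smooth forms, and the maps are integration-compatible restrictions given by the standard de Rham theorem argument. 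Applying $G$-invariants preserves quasi-isomorphisms, by exactness of invariants and averaging. It then remains to identify $A_{\mathrm{PL}}(N)^G$ with $A_{\mathrm{PL}}(N/G)$ up to quasi-isomorphism. For the simplicial set of the regular cell structure, the invariant PL forms on $K$ are exactly the PL forms on the quotient complex $K/G$. This holds because a $G$-invariant compatible family of polynomial forms on simplices is the same thing as a compatible family on orbit representatives, and the condition that the stabilizer acts trivially on a fixed simplex makes the gluing consistent. I expect this step to be the main obstacle: the bookkeeping needed to make the triangulation regular enough that $K/G$ is again a simplicial complex, which may require a barycentric subdivision. An alternative that avoids the triangulation would be to use a Thom--Whitney or \v{C}ech--de Rham model on a $G$-invariant good cover, but I would try the triangulation first.

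\emph{Step 3: Poincar\'e duality.} Suppose $N$ is closed and oriented and $G$ preserves the orientation, and let $n=\dim_\R N$. Integration gives a $G$-invariant isomorphism $\H^n(N)\cong\C$, so $\H^n(N)^G=\H^n(N)$ is one-dimensional. Since $N$ is connected, $\H^0(N)^G=\C$. For a nonzero invariant class $x\in\H^k(N)^G$, Poincar\'e duality gives some $y\in\H^{n-k}(N)$ with $\int_N xy\neq0$. Replacing $y$ by its $G$-average $\bar y=\frac1{|G|}\sum_g g^*y$, we get
\[
\int_N x\bar y=\frac1{|G|}\sum_g\int_N g^*(xy)=\int_N xy,
\]
because $x$ is invariant and each $g$ preserves orientation. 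Hence the pairing restricted to invariants is nondegenerate, and by Step 1 the algebra $\H^*(N/G;\Q)\otimes\C$ is a Poincar\'e duality algebra. The argument works verbatim over $\Q$, with the fundamental class in place of integration.
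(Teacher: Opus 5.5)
Your proposal is correct and follows essentially the same route as the paper: Illman's equivariant triangulation, the $G$-equivariant zigzag $\Omega^*(N;\C)\to A_{\mathrm{ps}}(K;\C)\leftarrow A_{\mathrm{PL}}(K;\C)$, exactness of invariants, the identification $A_{\mathrm{PL}}(K;\C)^G\cong A_{\mathrm{PL}}(K/G;\C)$ once barycentric subdivision removes inversions, and the averaging argument for Poincar\'e duality. In your zigzag the right-hand term should be the PL forms on the complex $K$, not on $N$. The only real difference is that you derive $\H^*(N/G)\cong\H^*(N)^G$ separately by a sheaf-theoretic transfer, whereas the paper reads it off from the same zigzag over $\Q$.
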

\begin{proof}
By Illman's equivariant triangulation theorem \cite{Illman1978}, $N$ admits a $G$-equivariant triangulation $K$. After barycentric subdivision we may assume that the action is without inversions, so $K/G$ is a simplicial complex with realization $N/G$.

Let $A_{\mathrm{ps}}(K;\C)$ denote the cdga of compatible piecewise smooth forms. The de Rham and polynomial de Rham comparison maps give a $G$-equivariant zigzag of quasi-isomorphisms
\[
\Omega^*(N;\C)
\longrightarrow
A_{\mathrm{ps}}(K;\C)
\longleftarrow
A_{\mathrm{PL}}(K;\C);
\]
see \cite[Theorem~7.1]{Sullivan1977} and \cite[Theorem~2.2]{BousfieldGugenheim1976}. Since $G$ is finite, taking invariants is exact. Moreover, the absence of inversions gives $A_{\mathrm{PL}}(K;\C)^G\cong A_{\mathrm{PL}}(K/G;\C)$. Hence $\Omega^*(N;\C)^G$ is connected to
$A_{\mathrm{PL}}(N/G;\C)$ by a zigzag of quasi-isomorphisms and is
therefore a cdga model of $N/G$. The same argument over $\Q$ gives $\H^*(N/G;\Q)\cong \H^*(N;\Q)^G$. Suppose now that $N$ is closed and oriented of real dimension $d$ and that $G$ preserves its orientation. For $0\neq z\in \H^k(N;\Q)^G$, choose $w\in \H^{d-k}(N;\Q)$ with $\langle z\smile w,[N]\rangle\neq0$. Averaging,
\[
\overline w=\frac1{|G|}\sum_{g\in G}g^*w
\]
is $G$-invariant and still satisfies $\langle z\smile\overline w,[N]\rangle\neq0$, because both $z$ and $[N]$ are $G$-invariant. Thus the cup-product pairing on $\H^*(N;\Q)^G$ is nondegenerate. Since $\H^d(N;\Q)^G=\H^d(N;\Q)\cong\Q$, the quotient $N/G$ therefore satisfies rational Poincar\'e duality.
\end{proof}

\begin{Rem}
The orbifold discussion in \cite[\S2.3]{CFM2008} assumes isolated singularities, whereas \cref{lem:finite-quotient} applies to the nonisolated fixed loci occurring here. Throughout the paper, cohomology of a finite quotient means ordinary cohomology of its underlying topological space, not Chen--Ruan cohomology.
\end{Rem}

\section{The invariant algebra and its defining system}
\label{sec:algebra}

We now introduce the finite-dimensional differential graded algebra in which the nonformality obstruction will be constructed. Put $m=n+1$, and let
\[
V=\{(z_1,\ldots,z_m)\in\C^m:\sum_{i=1}^m z_i=0\}
\]
be the standard representation of $S_m$. Thus $\dim V=n$. The standard $S_m$-invariant bilinear form identifies $V$ with $V^\vee$, and we use this identification throughout. Let
\[
E_n=\Lambda\bigl(V_x^\vee\oplus V_w^\vee\oplus V_u^\vee\oplus V_v^\vee\bigr),
\]
with all generators placed in degree one. We write the four families as $x=(x_i)$, $w=(w_i)$, $u=(u_i)$ and $v=(v_i)$. Each satisfies the relation $\sum_i x_i=\sum_i w_i=\sum_i u_i=\sum_i v_i=0$.

We shall repeatedly use the $S_m$-invariant contractions
\begin{equation}\label{eq:uniform-tensors}
B(r,s)=\sum_{i=1}^m r_is_i,
\qquad
T_n(r,s,t)=\sum_{i=1}^m r_is_it_i,
\end{equation}
where all products are exterior products taken in the displayed order. Define
\begin{equation}\label{eq:uniform-quadrics}
a=B(x,u),\qquad
b=B(x,w),\qquad
c=B(w,u),\qquad
q=B(w,v).
\end{equation}

We equip $E_n$ with the $S_m$-equivariant differential
\begin{equation}\label{eq:uniform-d}
dx_i=dw_i=du_i=0,
\qquad
dv_i=b-mx_iw_i.
\end{equation}
This is compatible with the relations among the generators, since $\sum_i dv_i=mb-m\sum_i x_iw_i=0$, and $d^2=0$ because $db=0$. We shall work with the invariant cdga $\cA_n=(E_n,d)^{S_m}$. The elements $a,b,c$ are closed, and so is $q$. Indeed, $dq=-\sum_iw_i(b-mx_iw_i)=0$, because $\sum_iw_i=0$ and every term $w_ix_iw_i$ vanishes.

We now construct a defining system for the $a$-Massey product of \cref{def:a-massey}. Set
\begin{equation}\label{eq:uniform-primitives}
\alpha=T_n(x,u,v),\qquad
\beta=T_n(x,w,v),\qquad
\gamma=T_n(w,u,v).
\end{equation}
Using \eqref{eq:uniform-d}, the terms involving $mx_iw_i$ vanish because they introduce a repeated $x_i$ or $w_i$. Hence
\begin{equation}\label{eq:uniform-primitive-identities}
d\alpha=ba,\qquad
d\beta=b^2,\qquad
d\gamma=bc.
\end{equation}
Thus $\alpha,\beta,\gamma$ form a defining system for $\langle b;a,b,c\rangle$. The corresponding degree-eight representative is
\begin{equation}\label{eq:uniform-M}
M_n=a\beta\gamma+b\gamma\alpha+c\alpha\beta.
\end{equation}
In particular, $M_n$ is closed and $S_m$-invariant.

The crucial feature is the uniformity of the construction: the central class is always $b$, and the obstruction $M_n$ always lies in degree eight. The dependence on $n$ will enter only through the complementary closed class used to detect the nonvanishing of $[M_n]$.

\subsection{Low-degree cohomology and indeterminacy}

We next compute the low-degree cohomology needed to control the indeterminacy of the $a$-Massey product. Since $V^{S_m}=0$, there are no invariant degree-one elements, so $\cA_n^1=0$.

The standard representation is irreducible and self-dual, and its unique invariant bilinear form is symmetric. Hence $(\Lambda^2V)^{S_m}=0$, while $(V\otimes V)^{S_m}=\C\cdot B$. It follows that the invariant quadratic elements are precisely the six pairings between distinct copies of $V$. Besides $a,b,c,q$, set $p=B(x,v)$ and $r=B(u,v)$. From \eqref{eq:uniform-d} one gets
\begin{equation}\label{eq:degree-two-differentials}
da=db=dc=dp=dq=0, \qquad dr=m\,T_n(x,w,u).
\end{equation}
Since $T_n(x,w,u)\neq0$ for $n\geq2$, the class $r$ is not closed. As there are no degree-two boundaries, we obtain
\begin{equation}\label{eq:low-cohomology}
\H^1(\cA_n)=0, \qquad \H^2(\cA_n)
= \operatorname{span}\{[a],[b],[c],[p],[q]\} \cong\C^5.
\end{equation}

The vanishing needed to eliminate the indeterminacy is the following.

\begin{Lem}\label{lem:uniform-H3}
For every $n\geq2$, one has $\H^3(\cA_n)=0$.
\end{Lem}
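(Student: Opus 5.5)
The plan is to compute $\cA_n^3$ explicitly by invariant theory, and then to check that every closed invariant $3$-form is a boundary of one of the degree-two elements found in \eqref{eq:degree-two-differentials}.

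\textbf{Step 1: describe $\cA_n^3$.} Since $E_n^3=\Lambda^3(V_x\oplus V_w\oplus V_u\oplus V_v)$ (using $V\cong V^\vee$), it splits into three kinds of summands. The first kind is $V_r\otimes V_s\otimes V_t$ for three distinct families. The second is $\Lambda^2V_r\otimes V_s$ with $r\neq s$. The third is $\Lambda^3V_r$. I would then use the following facts about the standard representation of $S_m$ with $m\geq3$.

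\begin{itemize}
\item $(V^{\otimes3})^{S_m}$ is one-dimensional and spanned by the cubic contraction $T_n$. This holds because $V\otimes V$ contains exactly one copy of $V$; for $m=3$ one has $V\otimes V=\mathbf 1\oplus V\oplus\mathrm{sgn}$.
\item $\Lambda^2V$ is the irreducible hook representation $(m-2,1,1)$, which is the sign representation when $m=3$. In either case it does not contain $V$, so $(\Lambda^2V\otimes V)^{S_m}=0$.
\item $(\Lambda^3V)^{S_m}\subset (V^{\otimes 3})^{S_m}=\C\,T_n$. The contraction $T_n(r,r,r)$ vanishes because $r_i^2=0$, so this space is $0$.
\end{itemize}

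Consequently $\cA_n^3$ is the four-dimensional space spanned by
\[
T_n(x,w,u),\qquad \alpha=T_n(x,u,v),\qquad \beta=T_n(x,w,v),\qquad \gamma=T_n(w,u,v).
\]

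\textbf{Step 2: cocycles.} The element $T_n(x,w,u)$ is closed. By \eqref{eq:uniform-primitive-identities}, the differential sends $\alpha,\beta,\gamma$ to $ba$, $b^2$ and $bc$. To separate these three images, I would introduce the $\Z^3$-grading that counts occurrences of $x$, $w$ and $u$, with $v$ assigned weight $(1,1,0)$. The differential \eqref{eq:uniform-d} preserves this grading, since $dv_i$ is a combination of the products $x_jw_j$. In this grading $b^2$, $ab$ and $bc$ have the distinct weights $(2,2,0)$, $(2,1,1)$ and $(1,2,1)$. Hence a combination $\lambda\alpha+\mu\beta+\nu\gamma$ (plus a multiple of $T_n(x,w,u)$) is closed only if $\lambda ab=\mu b^2=\nu bc=0$.

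It then remains to show that $ab$, $b^2$ and $bc$ are each nonzero when $n\geq2$. For $b^2$, expand
\[
b^2=\sum_{i\neq j}x_iw_ix_jw_j .
\]
Choose a basis in which $x_1,x_2$ and $w_1,w_2$ are independent; this is possible since $\dim V=n\geq2$. Then extract a monomial coefficient such as that of $x_1w_1x_2w_2$. That coefficient comes from a sum over terms whose coordinates differ from $x_m$ in at most a controlled way, and one checks it is nonzero. The products $ab$ and $bc$ are handled by the same argument with one family relabelled. This gives $Z^3(\cA_n)=\C\,T_n(x,w,u)$.

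\textbf{Step 3: boundaries.} By \eqref{eq:degree-two-differentials}, $dr=m\,T_n(x,w,u)$, so the one-dimensional space of cocycles consists of boundaries. Therefore $\H^3(\cA_n)=0$.

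The main obstacle is Step 1. One must be careful that no invariant comes from $\Lambda^2V\otimes V$, which is the small-$m$ representation-theoretic check, and that the fact that $V$ occurs with multiplicity one in $V\otimes V$ holds uniformly for all $m\geq3$. If this becomes delicate, I would fall back on a direct argument. One writes an invariant element of $V_r\otimes V_s\otimes V_t$ as $\sum_{i,j,k}c_{ijk}\,r_is_jt_k$ with coefficients invariant under $S_m$, so that $c_{ijk}$ depends only on the coincidence pattern among $i,j,k$. One then reduces using the relations $\sum_i r_i=0$, which collapses all such invariants onto $T_n$.
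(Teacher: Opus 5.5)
Your proposal is correct and follows essentially the paper's proof: you use the same basis $T_n(x,w,u),\alpha,\beta,\gamma$ of $\cA_n^3$, the same multidegree separation of $ab$, $b^2$, $bc$, and the same primitive $r/m$. The only differences are that you obtain $\dim(V^{\otimes3})^{S_m}=1$ and the vanishing on $\Lambda^2V\otimes V$ and $\Lambda^3V$ from the decompositions of $V\otimes V$ and the hook $\Lambda^2V$, rather than from the paper's orbit count on $(\C^m)^{\otimes3}$ and the symmetry of $T_n$, and that your vaguely phrased nonvanishing check is made precise in the coordinates of \cref{lem:orthogonal-coordinates}, where $b^2=2\sum_{i<j}X_iW_iX_jW_j\neq0$ for $n\geq2$ (and likewise $ab=\sum_{i\neq j}X_iU_iX_jW_j\neq0$, $bc\neq0$).
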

\begin{proof}
Let $P=\C^m=\C\oplus V$ be the permutation representation. For $m\geq3$, the diagonal action of $S_m$ on ordered triples of indices has five orbits: one with all three indices equal, three with exactly two equal, and one with all three distinct. Therefore $\dim(P^{\otimes3})^{S_m}=5$.

Expanding $(\C\oplus V)^{\otimes3}$ and using $V^{S_m}=0$ together with $\dim(V^{\otimes2})^{S_m}=1$, we obtain $\dim(V^{\otimes3})^{S_m}=1$. This invariant line is generated by the symmetric tensor $T_n$. It is nonzero, for instance because its diagonal cubic evaluates nontrivially at $(1,1,-2,0,\ldots,0)\in V$.

We now decompose $\Lambda^3(V_x\oplus V_w\oplus V_u\oplus V_v)$ according to multidegree in the four copies. An invariant of type $(3,0,0,0)$ would require a totally alternating invariant tensor in $V^{\otimes3}$, while one of type $(2,1,0,0)$ would require antisymmetrization in two tensor factors. Both vanish because the unique invariant tensor in $V^{\otimes3}$ is symmetric. Thus invariant cubic forms occur only when the three factors come from three distinct copies.

There are four such choices. Hence $\cA_n^3$ has basis
\[
t:=T_n(x,w,u),\qquad \alpha,\qquad \beta,\qquad \gamma.
\]
By \eqref{eq:uniform-primitive-identities}, the differentials of $\alpha,\beta,\gamma$ are $ab$, $b^2$, and $bc$, respectively. These have distinct multidegrees $(2,1,1,0)$, $(2,2,0,0)$, and $(1,2,1,0)$, and each is nonzero for $n\geq2$. They are therefore linearly independent. Hence
\[
\ker\bigl(d:\cA_n^3\to\cA_n^4\bigr)=\C\cdot t.
\]
On the other hand, \eqref{eq:degree-two-differentials} gives $dr=mt$. Thus $t$ is exact, and therefore $\H^3(\cA_n)=0$.
\end{proof}

The preceding vanishing removes the entire indeterminacy of the $a$-Massey product.

\begin{Cor}\label{cor:zero-indeterminacy}
For every $n\geq2$,
\begin{equation}\label{eq:singleton-massey}
\langle b;a,b,c\rangle=\{[M_n]\}.
\end{equation}
\end{Cor}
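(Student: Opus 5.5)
The plan is to show directly that every defining system yields the same cohomology class. By \cref{def:a-massey}, a defining system for $\langle b;a,b,c\rangle$ is a triple $(\xi_1,\xi_2,\xi_3)$ of elements of $\cA_n^3$ with $d\xi_1=ba$, $d\xi_2=b^2$, $d\xi_3=bc$. The triple $(\alpha,\beta,\gamma)$ from \eqref{eq:uniform-primitives} is one such system, by \eqref{eq:uniform-primitive-identities}. Any other system therefore has the form $\xi_1=\alpha+z_1$, $\xi_2=\beta+z_2$, $\xi_3=\gamma+z_3$ with each $z_i$ a closed element of $\cA_n^3$.

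By \cref{lem:uniform-H3}, $\H^3(\cA_n)=0$, so each $z_i$ is exact. The proof of that lemma gives the sharper fact that the closed invariant cubics are exactly $\C\cdot t$, and $t=d(r/m)$. I will use the general statement: write $z_i=dy_i$ with $y_i\in\cA_n^2$.

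Next I expand
\[
a\xi_2\xi_3+b\xi_3\xi_1+c\xi_1\xi_2
\]
and subtract $M_n$. The difference has two kinds of terms.
\begin{itemize}
\item The terms linear in the $z_i$ are, for example, $a\beta z_3+a z_2\gamma+b\gamma z_1+\cdots$. Consider $a\beta\,dy_3$. Since $a$ and $z_3$ are closed, it equals
\[
\pm d(a\beta y_3)\pm a\,(d\beta)\,y_3=\pm d(\cdots)\pm a b^2 y_3 .
\]
\item The quadratic terms, such as $a z_2 z_3=a\,dy_2\,dy_3=d(a y_2\,dy_3)$, are exact outright.
\end{itemize}

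So I must show that the leftover products of $d\alpha, d\beta, d\gamma$ against the $y_j$ cancel in sum. Grouping by $y_j$, the coefficient of $y_1$ collects $c\,d\beta$ and $b\,d\gamma$, with signs, giving $\pm(cb^2 - b\cdot bc)=0$. Similarly the coefficient of $y_2$ gives $a\,d\gamma$ and $c\,d\alpha$, namely $abc-cba=0$. The coefficient of $y_3$ gives $a\,d\beta$ and $b\,d\alpha$, namely $ab^2-b\cdot ba=0$. This is precisely the cancellation behind \cite[Lemma~2.5]{CFM2008}, where the indeterminacy of the $a$-Massey product is shown to lie in the ideal generated by $\H^3$-classes. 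Alternatively, I can cite that lemma directly: the indeterminacy is contained in $a_1\H^3\cdot\H^3+\cdots$ together with $\H^2\cdot\H^3\cdots$-type terms, all of which vanish when $\H^3=0$.

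The main obstacle is sign bookkeeping. Everything in sight has even degree (the quadrics) or odd degree (the $\xi_i, z_i$), and the three linear terms must cancel with the correct signs. I would handle this by first noting that, since $z_i$ is odd and closed,
\[
\xi_j z_i=\xi_j\,dy_i=-d(\xi_j y_i)+(d\xi_j)\,y_i .
\]
I would then verify that the resulting $y_i$-coefficients are $ab$-symmetric combinations $b\,a\,b-a\,b\,b$, which vanish because even-degree elements commute. This yields $\langle b;a,b,c\rangle=\{[M_n]\}$.
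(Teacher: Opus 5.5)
Your proposal is correct and follows essentially the same route as the paper: both use $\H^3(\cA_n)=0$ to write each change of primitive as $dy_i$, and both rely on the cancellation $c\,d\beta-b\,d\gamma=0$ and its two analogues, which the paper phrases as closedness of $c\beta-b\gamma$. The only difference is that the paper changes one primitive at a time, so no quadratic cross terms such as $a\,dy_2\,dy_3$ arise, whereas you change all three at once and dispose of those cross terms separately as exact.
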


\begin{proof}
Any two primitives of $ba$, $b^2$, or $bc$ differ by a closed degree-three element, which is exact by \cref{lem:uniform-H3}. For example, replacing $\alpha$ by $\alpha+d\eta$, with $|\eta|=2$, changes the representative by
\[
M_n'-M_n=d\eta\,(c\beta-b\gamma).
\]
The factor $c\beta-b\gamma$ is closed, since $d(c\beta-b\gamma)=cb^2-b^2c=0$. Hence $M_n'-M_n=d\bigl(\eta(c\beta-b\gamma)\bigr)$. The same argument applies to changes of $\beta$ and $\gamma$, so every defining system determines the same cohomology class. Independence of the chosen representatives of the input classes follows from \cite[Lemma~2.5]{CFM2008}.
\end{proof}

\section{A nonzero pairing in every dimension}
\label{sec:pairing}

We now prove a uniform top-degree identity detecting the Massey class
$[M_n]$. The computation has three ingredients: an orthonormalization
of the invariant quadratic form, two elementary contractions of the
invariant cubic tensor, and the resulting top-degree calculation.

\begin{Lem}\label{lem:uniform-pairing}
Using $x_1,\ldots,x_n$, and similarly $w_i,u_i,v_i$, as independent
coordinates on the four copies of $V$, set
\[
\vol_n=
x_1\cdots x_n\,
w_1\cdots w_n\,
u_1\cdots u_n\,
v_1\cdots v_n.
\]
Then, for every $n\geq2$,
\begin{equation}\label{eq:uniform-pairing}
M_n\,a^{n-2}q^{n-2}
=
(-1)^n(n+1)^2n!(n-2)!\,\vol_n.
\end{equation}
In particular, $M_n\,a^{n-2}q^{n-2}\neq0$.
\end{Lem}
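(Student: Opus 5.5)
The plan is a direct top-degree computation in which $S_m$-invariance is used to reduce everything to two scalar contractions of the cubic tensor. First I would check multidegrees in the four copies $(x,w,u,v)$. The three terms $a\beta\gamma$, $b\gamma\alpha$ and $c\alpha\beta$ of $M_n$ all have multidegree $(2,2,2,2)$. The factors contribute $a^{n-2}$ of multidegree $(n-2,0,n-2,0)$ and $q^{n-2}$ of multidegree $(0,n-2,0,n-2)$. So the product has multidegree $(n,n,n,n)$, is a multiple of $\vol_n$, and only the scalar needs to be found.

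Next I would orthonormalize. Over $\C$, choose a basis $e_1,\dots,e_n$ of $V$ that is orthonormal for the invariant form $B$, and use the corresponding coordinates $\hat x_k,\hat w_k,\hat u_k,\hat v_k$ on each copy. In these coordinates
\[
B(r,s)=\textstyle\sum_k \hat r_k\hat s_k,\qquad T_n(r,s,t)=\textstyle\sum_{j,k,l}\tau_{jkl}\,\hat r_j\hat s_k\hat t_l,
\]
where $\tau$ is a fully symmetric tensor. The Gram matrix of $B$ in the coordinates $x_1,\dots,x_n$ (with $x_m=-\sum x_i$) is $I+J$, whose determinant is $m$. Hence each copy's top form changes by a factor $m^{\pm1/2}$, and the four copies together produce the factor $(n+1)^2$ relating $\hat{\vol}_n$ to $\vol_n$; I would track this normalization and its sign carefully.

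In the new basis,
\[
a^{n-2}=(n-2)!\,(\pm)\sum_{|K|=n-2}\prod_{k\in K}\hat x_k\hat u_k,
\]
and similarly for $q^{n-2}$ in $(\hat w,\hat v)$. Multiplying by $M_n$ therefore forces the remaining factors to supply $\hat x,\hat u$ exactly in the complementary pair $\{k,l\}$ and $\hat w,\hat v$ in a complementary pair $\{k',l'\}$. Expanding each of the three terms of $M_n$ then expresses the coefficient as sums of products $\tau\tau$ contracted over indices. The two contractions needed come from Schur's lemma.

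\begin{itemize}
\item The first contraction is $\sum_j\tau_{jjk}$. It is an invariant vector in $V$, so it vanishes. This kills the diagonal contributions, such as those from the $a$ and $b$ factors pairing equal indices.
\item The second contraction is $\sum_{j,k}\tau_{jkl}\tau_{jkl'}$. It equals $\kappa\,\delta_{ll'}$, and $\kappa$ is obtained from the trace $n\kappa=\sum\tau_{jkl}^2=\sum_{i,i'}P_{ii'}^3$, where $P=I-J/m$ is the orthogonal projection onto $V$. This gives $n\kappa=(m-1)(m-2)/m$.
\end{itemize}

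Using these, each of the three terms reduces to a multiple of $\kappa$ or of $\sum\tau^2$, times combinatorial counts of the pairs $\{k,l\},\{k',l'\}$. These counts produce the remaining factor $n!/(n-2)!$ up to the $(n-2)!^2$ coming from the powers. Summing the three terms should give $(-1)^n(n+1)^2n!(n-2)!$.

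I expect the main obstacle to be the bookkeeping in this last step: the signs from reordering odd generators into the order of $\vol_n$, and the relative signs of the three terms, which must add rather than cancel. If the general bookkeeping becomes unwieldy, I would first redo the whole computation for $n=2$, where $a^{0}q^{0}=1$ and $M_2$ is itself top degree, and then verify that the $n$-dependence is exactly the combinatorial factor above. The final claim $M_na^{n-2}q^{n-2}\neq0$ is then immediate, since the constant is nonzero.
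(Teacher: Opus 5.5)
Your route is the paper's: orthonormalize using the Gram matrix $I_n+\mathbf 1\mathbf 1^{\,t}$, which gives the factor $(n+1)^2$. You then use that the cubic tensor is symmetric and trace-free, with $\sum C_{abc}^2=\sum_{i,j}P_{ij}^3=n(n-1)/(n+1)$. All of this is correct.

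The gap is that you stop exactly where the content of the lemma lies. The constant is only asserted (``should give''), and your sketch of where it comes from is inaccurate. The factor $n(n-1)=n!/(n-2)!$ does not come from counting pairs. It comes from $(n+1)\kappa_n$, where $\kappa_n=\sum C_{abc}^2$ and $n+1=(n-1)+1+1$. The summand $n-1$ appears because the explicit $a$ in $a\beta\gamma$ merges with $a^{n-2}$ into $a^{n-1}$. So this term carries $(n-1)!(n-2)!$, while $b\gamma\alpha$ and $c\alpha\beta$ carry only $((n-2)!)^2$. You do not identify this asymmetry. Your fallback of checking $n=2$ cannot see it either, since both factorials equal $1$ there.

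The relative signs are the other missing piece, and they matter. If the $a\beta\gamma$ contribution entered with the opposite sign, the total would be $(3-n)((n-2)!)^2\kappa_n$ times the volume form, which vanishes for $n=3$. You must show that each of the three terms equals $+\kappa_n\Omega$ times its factorial, where $\Omega=\prod_iX_iU_i\prod_iW_iV_i=(-1)^nX_1\cdots X_nW_1\cdots W_nU_1\cdots U_nV_1\cdots V_n$. The paper does this as follows:
\begin{itemize}
\item write the leftover $X$--$U$ and $W$--$V$ index pairings as antisymmetrized contractions $\delta_{ij}\delta_{lr}-\delta_{ir}\delta_{lj}$, which kill coincident indices, so every sum may run over all indices;
\item expand and discard every term containing some $\sum_aC_{aac}$;
\item check that the single surviving full contraction is $+\sum C_{abc}^2$ in each of the three cases.
\end{itemize}
The pairs $X_kU_k$ and $W_lV_l$ absorbed from the powers of $a$ and $q$ are even, so this signed contraction is independent of $n$. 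An explicit $n=2$ computation therefore does fix its sign; it gives $M_2=2\Omega=18\vol_2$. But you need to state and use this independence, and you still need the general factorials. Then $\bigl((n-1)!(n-2)!+2((n-2)!)^2\bigr)\kappa_n=n!(n-2)!$, and $\Omega=(-1)^n(n+1)^2\vol_n$ gives the stated formula.
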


\subsection*{Step 1: orthonormal coordinates}

\begin{Lem}\label{lem:orthogonal-coordinates}
There are degree-one generators $X_i,W_i,U_i,V_i$, for
$1\leq i\leq n$, such that
\begin{equation}\label{eq:orthogonal-quadrics}
a=\sum_iX_iU_i,\qquad
b=\sum_iX_iW_i,\qquad
c=\sum_iW_iU_i,\qquad
q=\sum_iW_iV_i.
\end{equation}
Moreover,
\begin{equation}\label{eq:volume-change}
X_1\cdots X_n\,
W_1\cdots W_n\,
U_1\cdots U_n\,
V_1\cdots V_n
=
(n+1)^2\vol_n.
\end{equation}
\end{Lem}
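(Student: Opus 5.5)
The approach is to diagonalize the invariant bilinear form $B$ on $V$ by a single linear change of coordinates, applied simultaneously to all four copies. In the coordinates $x_1,\ldots,x_n$ (with $x_m=-\sum_{i\le n}x_i$), the form reads
\[
B(x,u)=\sum_{i=1}^n x_iu_i+\Bigl(\sum_{i=1}^n x_i\Bigr)\Bigl(\sum_{j=1}^n u_j\Bigr),
\]
that is, $B(x,u)=x^{T}Gu$ with Gram matrix $G=I_n+J_n$, where $J_n$ is the all-ones matrix. This matrix is symmetric with eigenvalues $1$ (multiplicity $n-1$) and $n+1$, so $\det G=n+1$.

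Over $\C$ (even over $\R$, since $G$ is positive definite) choose a matrix $P$ with $P^{T}P=G$, for instance $P=G^{1/2}$. Then define
\[
X=Px,\quad W=Pw,\quad U=Pu,\quad V=Pv,
\]
each componentwise as linear combinations of degree-one generators. The same $P$ is used for every family.

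The key steps are as follows.

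\begin{enumerate}
\item Check that for any two families $r,s$ one has $\sum_i R_iS_i=\sum_{k,l}(P^TP)_{kl}r_ks_l=B(r,s)$. This uses only bilinearity and the fact that the displayed order is preserved. It gives \eqref{eq:orthogonal-quadrics} for $a,b,c,q$ at once.
\item Compute the volume factor. For one family, $X_1\cdots X_n=\det(P)\,x_1\cdots x_n$ by the standard determinant formula for exterior products of linear forms. Hence the full product of the four blocks equals $\det(P)^4\vol_n=(\det G)^2\vol_n=(n+1)^2\vol_n$.
\end{enumerate}

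The only subtle point is the sign and normalization of $\det P$. It enters to the fourth power, so the sign is irrelevant, and $\det(P)^2=\det G$ is forced by $P^TP=G$.

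The main obstacle, though mild, is verifying $\det G=n+1$ and making sure the quadric identity holds in the graded-commutative setting. For the identity, the coefficient matrix is symmetric, and we never reorder factors within a pair: each term is $r_ks_l$ with $r$ first. So no sign issues arise.

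For the determinant, I would use $\det(I+\mathbf 1\mathbf 1^T)=1+\mathbf 1^T\mathbf 1=n+1$, by the matrix determinant lemma.
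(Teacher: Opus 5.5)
Your proposal is correct and is essentially the paper's proof. Both write $B$ with Gram matrix $G=I_n+\mathbf 1\mathbf 1^{t}$, apply a square root of $G$ simultaneously to all four families (you allow any $P$ with $P^{t}P=G$; the paper takes $G^{1/2}$), and read off the factor $\det(P)^4=(\det G)^2=(n+1)^2$, where you get $\det G$ from the matrix determinant lemma and the paper from the eigenvalues.
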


\begin{proof}
Since the coordinates of $V$ satisfy $\sum_{i=1}^m r_i=0$, with
$m=n+1$, we may take $r_1,\ldots,r_n$ as independent and write
$r_m=-\sum_{i=1}^n r_i$. In these coordinates the invariant bilinear
form $B$ has matrix
\[
G=I_n+\mathbf 1\mathbf 1^{\,t},
\]
where $\mathbf 1=(1,\ldots,1)^t$. Hence $G$ has eigenvalue $1$ on
$\mathbf 1^\perp$ and eigenvalue $n+1$ on $\C\mathbf 1$. In
particular, $\det G=n+1$.

Since $G$ is positive definite, applying the change of coordinates
$G^{1/2}$ simultaneously to the four copies $x,w,u,v$ transforms $B$
into the standard pairing. This gives \eqref{eq:orthogonal-quadrics}.
Moreover, $\det(G^{1/2})=\sqrt{n+1}$, so the four top exterior
products together acquire the factor $(n+1)^2$, proving
\eqref{eq:volume-change}.
\end{proof}

\subsection*{Step 2: the invariant cubic tensor}

Write the original coordinates in the orthonormal basis as
$x_i=\sum_a e_{ia}X_a$, and use the same coefficients for the other
three copies. Thus $w_i=\sum_a e_{ia}W_a$,
$u_i=\sum_a e_{ia}U_a$, and $v_i=\sum_a e_{ia}V_a$.

The vectors $e_i=(e_{i1},\ldots,e_{in})$ satisfy
\begin{equation}\label{eq:e-relations}
\sum_{i=1}^m e_{ia}e_{ib}=\delta_{ab},
\qquad
\sum_{i=1}^m e_{ia}=0,
\qquad
\langle e_i,e_j\rangle
=
\delta_{ij}-\frac1{n+1}.
\end{equation}
Define
\begin{equation}\label{eq:cubic-coefficients}
C_{abc}=\sum_{i=1}^m e_{ia}e_{ib}e_{ic}.
\end{equation}
Then $T_n(R,S,T)=\sum_{a,b,c}C_{abc}R_aS_bT_c$ in orthonormal
coordinates. In particular, this expression gives $\alpha,\beta$ and
$\gamma$ by taking $(R,S,T)=(X,U,V),(X,W,V)$ and $(W,U,V)$,
respectively.

\begin{Lem}\label{lem:cubic-contractions}
The tensor $C_{abc}$ is symmetric and satisfies
\begin{equation}\label{eq:cubic-contractions}
\sum_aC_{aac}=0
\quad\text{for every }c,
\qquad
\sum_{a,b,c}C_{abc}^2
=
\frac{n(n-1)}{n+1}.
\end{equation}
\end{Lem}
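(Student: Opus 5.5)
Symmetry of $C_{abc}$ is immediate from \eqref{eq:cubic-coefficients}, since the summand $e_{ia}e_{ib}e_{ic}$ is a product of commuting scalars. For the two contractions, the plan is to interchange the order of summation and reduce everything to the relations \eqref{eq:e-relations} satisfied by the vectors $e_i\in\C^n$ (in fact $\R^n$).

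For the trace identity, write
\[
\sum_a C_{aac}=\sum_{i=1}^m\Bigl(\sum_a e_{ia}^2\Bigr)e_{ic}=\sum_{i=1}^m \|e_i\|^2 e_{ic}.
\]
By the third relation in \eqref{eq:e-relations}, $\|e_i\|^2=1-\frac1{n+1}=\frac n{n+1}$ is independent of $i$. Factoring this constant out, the remaining sum is $\sum_i e_{ic}$, which vanishes by the second relation. Hence $\sum_aC_{aac}=0$ for every $c$.

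For the squared norm, expand
\[
\sum_{a,b,c}C_{abc}^2=\sum_{i,j=1}^m\Bigl(\sum_a e_{ia}e_{ja}\Bigr)^3=\sum_{i,j}\langle e_i,e_j\rangle^3.
\]
The Gram matrix formula then gives diagonal terms $\bigl(\frac n{n+1}\bigr)^3$ and off-diagonal terms $\bigl(-\frac1{n+1}\bigr)^3$. There are $m=n+1$ diagonal pairs and $(n+1)n$ off-diagonal ones, so the total is
\[
\frac{(n+1)n^3-(n+1)n}{(n+1)^3}=\frac{n(n^2-1)}{(n+1)^2}=\frac{n(n-1)}{n+1}.
\]

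The only point needing care is the justification of \eqref{eq:e-relations} itself, since everything else is bookkeeping. I would derive it from \cref{lem:orthogonal-coordinates}: the $e_i$ are the images of the coordinate functionals $r\mapsto r_i$ under the isometry identifying $(V,B)$ with standard $\C^n$. Thus $\langle e_i,e_j\rangle$ equals the pairing of the $i$th and $j$th coordinate functionals on $V$. This pairing is the orthogonal projection of the $i$th standard basis vector onto $\mathbf 1^\perp$, paired with the $j$th one, which gives $\delta_{ij}-\frac1{n+1}$. The relation $\sum_ie_i=0$ follows because $\sum_i r_i=0$ on $V$. Finally, $\sum_ie_{ia}e_{ib}=\delta_{ab}$ expresses that $B=\sum_i r_is_i$ becomes the standard pairing in the orthonormal coordinates.
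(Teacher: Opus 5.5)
Your proof is correct and follows the paper's argument essentially verbatim: symmetry from the definition, the trace identity via the constant norm $\|e_i\|^2=\frac{n}{n+1}$ together with $\sum_i e_{ic}=0$, and the squared norm via $\sum_{i,j}\langle e_i,e_j\rangle^3$ split into diagonal and off-diagonal terms. Your extra justification of \eqref{eq:e-relations} is also correct and fills in a step the paper only asserts: the matrix $(e_{ia})$ has orthonormal columns lying in $\mathbf 1^\perp$, so $\langle e_i,e_j\rangle$ is the $(i,j)$ entry of the orthogonal projection onto $\mathbf 1^\perp$.
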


\begin{proof}
Symmetry is immediate from \eqref{eq:cubic-coefficients}. Using
\eqref{eq:e-relations},
\[
\sum_aC_{aac}
=
\sum_i\langle e_i,e_i\rangle e_{ic}
=
\frac{n}{n+1}\sum_i e_{ic}
=
0.
\]
For the norm, we obtain
\[
\sum_{a,b,c}C_{abc}^2
=
\sum_{i,j=1}^m\langle e_i,e_j\rangle^3.
\]
There are $m$ diagonal terms, each equal to $(n/m)^3$, and
$m(m-1)$ off-diagonal terms, each equal to $(-1/m)^3$. Since
$m=n+1$, their sum is $n(n-1)/(n+1)$.
\end{proof}

\subsection*{Step 3: the top-degree contraction}

Set
\begin{equation}\label{eq:orthogonal-volume}
\Omega=
(-1)^n
X_1\cdots X_n\,
W_1\cdots W_n\,
U_1\cdots U_n\,
V_1\cdots V_n
\end{equation}
and write
\[
\kappa_n:=\sum_{a,b,c}C_{abc}^2
=
\frac{n(n-1)}{n+1}.
\]

\begin{Lem}\label{lem:three-top-contractions}
In the coordinates of \cref{lem:orthogonal-coordinates},
\begin{align}
a\beta\gamma\,a^{n-2}q^{n-2}
&=(n-1)!(n-2)!\,\kappa_n\,\Omega,
\label{eq:first-top-contraction}\\
b\gamma\alpha\,a^{n-2}q^{n-2}
&=((n-2)!)^2\,\kappa_n\,\Omega,
\label{eq:second-top-contraction}\\
c\alpha\beta\,a^{n-2}q^{n-2}
&=((n-2)!)^2\,\kappa_n\,\Omega.
\label{eq:third-top-contraction}
\end{align}
\end{Lem}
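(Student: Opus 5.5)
The plan is to expand each of the three products in the orthonormal coordinates of \cref{lem:orthogonal-coordinates}, where every ingredient is a sum of monomials. By \eqref{eq:orthogonal-quadrics}, the elements $X_iU_i$ and $W_iV_i$ have even degree and commute, so
\[
a^{n-2}=(n-2)!\sum_{|I|=n-2}\prod_{i\in I}X_iU_i,\qquad q^{n-2}=(n-2)!\sum_{|J|=n-2}\prod_{j\in J}W_jV_j .
\]
A monomial of $a^{n-2}q^{n-2}$ indexed by $(I,J)$ lacks exactly the eight generators $X_p,X_{p'},U_p,U_{p'}$ and $W_s,W_{s'},V_s,V_{s'}$, where $I^c=\{p,p'\}$ and $J^c=\{s,s'\}$. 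Consequently the degree-eight factor ($a\beta\gamma$, $b\gamma\alpha$ or $c\alpha\beta$) contributes only through its monomials that use precisely these eight generators. This already accounts for the two factors $(n-2)!$ in each formula.

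For \eqref{eq:first-top-contraction}, I write $a=\sum_eX_eU_e$, $\beta=\sum C_{abc}X_aW_bV_c$ and $\gamma=\sum C_{def}W_dU_eV_f$.
\begin{itemize}
\item The factor $a$ must take an index $e\in\{p,p'\}$. The $X$ of $\beta$ and the $U$ of $\gamma$ must then both take the other index, say $p'$.
\item The pair of $W$-indices $(b,d)$ and the pair of $V$-indices $(c,f)$ must each be a permutation of $(s,s')$.
\item This gives four terms: $C_{p'ss}C_{s'p's'}$, $C_{p's's'}C_{sp's}$, $C_{p'ss'}C_{s'p's}$ and $C_{p's's}C_{sp's'}$. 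Each carries a sign from reordering into the standard order of $\Omega$.
\end{itemize}
A diagonal term, in which $\beta$ uses the same index in its $W$ and $V$ slots, and an off-diagonal term, in which the indices are crossed, differ by a transposition of two $V$'s. I therefore expect them to enter with opposite signs. The contribution of fixed $(I,J)$ is then $\pm2\,(C_{p'ss'}^2-C_{p'ss}C_{p's's'})$.

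Next I sum over all $I$ and $J$, that is, over ordered choices of $p'$ with $p\neq p'$ free and over unordered pairs $\{s,s'\}$. Let $\sum{}'$ denote the sum over $s\neq s'$. The diagonal part is handled by the trace identity of \cref{lem:cubic-contractions}:
\[
\sum{}'_{s,s'}C_{p'ss}C_{p's's'}=\Bigl(\sum_sC_{p'ss}\Bigr)^2-\sum_sC_{p'ss}^2=-\sum_sC_{p'ss}^2 .
\]
The off-diagonal part gives $\sum{}'_{s,s'}C_{p'ss'}^2$. Adding the two, the restriction $s\neq s'$ disappears, and I obtain $\sum_{s,s'}C_{p'ss'}^2$. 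Summing over $p'$ then gives $\kappa_n$. The choice of the companion index $p$ contributes the multiplicity $n-1$, which upgrades one factor $(n-2)!$ to $(n-1)!$. The remaining normalizations, including the factor $2$ against the unordered pair $\{s,s'\}$, must be checked to produce exactly $(n-1)!(n-2)!\,\kappa_n$.

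The identities \eqref{eq:second-top-contraction} and \eqref{eq:third-top-contraction} follow the same pattern, with one structural difference. In $b\gamma\alpha$, the factor $b=\sum X_eW_e$ links one $X$-index to one $W$-index. The condition that $e$ lie in both $I^c$ and $J^c$ forces $p=s$, say. This removes the free choice of a companion index, which is why only $((n-2)!)^2$ appears. The coefficients then reduce, again via the trace identity and symmetry of $C$, to $\sum C_{abc}^2=\kappa_n$. The case $c\alpha\beta$ is analogous, with $c=\sum W_eU_e$ linking a $U$-index to a $W$-index. Summing the three identities gives $(n-1)!(n-2)!\,\kappa_n\cdot\frac{n+1}{n-1}\cdot\ldots$; combined with \eqref{eq:volume-change} and $\kappa_n=n(n-1)/(n+1)$, this should reproduce \eqref{eq:uniform-pairing}.

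I expect the main obstacle to be the sign bookkeeping. Moving the eight degree-one generators into the standard order of $\Omega$, past the even blocks of $a^{n-2}q^{n-2}$, must consistently yield the global sign $(-1)^n$. It must also give the relative minus sign between the diagonal and crossed terms, since that relative sign is exactly what allows the trace identity to complete the sum to $\kappa_n$. To settle this I will first verify the case $n=2$ by hand, where $a^{0}q^{0}=1$ and $\Omega$ is the full top form. I then expect the sign pattern to propagate to general $n$ because the even blocks commute with everything.
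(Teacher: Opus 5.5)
Your route is the paper's: expand in the orthonormal coordinates, kill the unwanted terms with $\sum_a C_{aac}=0$, and use the symmetry of $C$ to reach $\kappa_n$. Your treatment of \eqref{eq:first-top-contraction} is correct, and your undetermined sign $\pm$ is $+$. That sign is uniform over configurations because $\Omega=\prod_i(X_iU_i)\prod_j(W_jV_j)$, which is exactly what the factor $(-1)^n$ encodes. These even pairs commute, so the sign of a configuration depends only on which $X$ is matched with which $U$ and which $W$ with which $V$.

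The gap is in \eqref{eq:second-top-contraction} and \eqref{eq:third-top-contraction}. There you assert the result rather than compute it, and your sketch misdescribes what happens. For $b\gamma\alpha$ the constraint is only $I^c\cap J^c\neq\emptyset$. This allows two types of configuration.
\begin{itemize}
\item Configurations with $I^c=J^c=\{e,o\}$ contribute $2\sum_{e\neq o}(C_{ooe}^2-C_{oee}C_{ooo})$.
\item Configurations sharing exactly one index $e$, with free companions $l\in I^c$ and $i\in J^c$, contribute
\[
\sum_{e,l,i\ \text{distinct}}\bigl(C_{ile}^2+C_{iie}C_{lle}-C_{iee}C_{lli}-C_{iil}C_{lee}\bigr).
\]
\end{itemize}
So companion indices are not removed. $\kappa_n$ emerges only from the sum of the two types, after you complete the restricted sums with the trace identity. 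The second type does not occur when $n=2$, so checking the $n=2$ total cannot calibrate its signs. You need a sign rule for each pairing pattern in both sectors: transposing two $U$'s, or two $V$'s, flips the sign. The case $c\alpha\beta$ has the same issue.

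The clean way to close this, and what the paper does, is to sum over all $(I,J)$ at once. The coefficient of $\Omega$ then becomes a fermionic Wick contraction with $X_iU_j\mapsto\delta_{ij}$ and $W_iV_j\mapsto\delta_{ij}$. Each sector produces an antisymmetrized $\delta\delta-\delta\delta$, and coincident indices are handled automatically. For $b\gamma\alpha$ this gives
\[
-\sum C_{ijk}C_{lrs}\bigl(\delta_{pj}\delta_{lr}-\delta_{pr}\delta_{lj}\bigr)\bigl(\delta_{pk}\delta_{is}-\delta_{ps}\delta_{ik}\bigr).
\]
Three of the four terms contain a trace of $C$. The fourth is $\sum C_{ijp}C_{jpi}=\kappa_n$, and $c\alpha\beta$ works the same way.

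This also gives the correct reason the factor $n-1$ is missing in these two identities. In $a\beta\gamma$, the pair $X_eU_e$ coming from $a$ can contract with itself, producing $\sum_e\delta_{ee}-1=n-1$. By contrast, $b=\sum X_eW_e$ and $c=\sum W_eU_e$ cannot self-contract, so their index is always tied into the coefficients $C$.
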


\begin{proof}
Recall that
$a=\sum_iX_iU_i$ and $q=\sum_iW_iV_i$. After expanding their
powers, a contribution to the top degree can occur only when every
$X$-index is paired with the corresponding $U$-index and every
$W$-index with the corresponding $V$-index. With the sign convention
incorporated in the definition of $\Omega$, one free $X$--$U$ pair
produces a Kronecker symbol $\delta_{ij}$, while two such pairs produce
the antisymmetrized contraction
$\delta_{ij}\delta_{lr}-\delta_{ir}\delta_{lj}$; the same rule applies
to the $W$--$V$ pairs. The powers of $a$ and $q$ account for the
remaining factorials.

We treat the three summands separately. Using
$\beta=\sum C_{ijk}X_iW_jV_k$ and
$\gamma=\sum C_{lrs}W_lU_rV_s$, the tensor contraction associated
with $a\beta\gamma$ is
\[
-\sum_{i,j,k,l,r,s}
C_{ijk}C_{lrs}\,
\delta_{ir}
\bigl(
\delta_{jk}\delta_{ls}
-
\delta_{js}\delta_{lk}
\bigr).
\]
The first term contains traces of $C$ and therefore vanishes by
\cref{lem:cubic-contractions}. The second gives
\[
\sum_{i,j,k}C_{ijk}C_{kij}
=
\sum_{i,j,k}C_{ijk}^2
=
\kappa_n,
\]
using the symmetry of $C$. Since the explicit factor $a$ combines
with $a^{n-2}$ to give $a^{n-1}$, the remaining quadratic factors
contribute $(n-1)!(n-2)!$. This proves
\eqref{eq:first-top-contraction}.

For the second summand, write
$b=\sum_pX_pW_p$. Since
$\gamma=\sum C_{ijk}W_iU_jV_k$ and
$\alpha=\sum C_{lrs}X_lU_rV_s$, the two pairs of free indices in the
$X$--$U$ and $W$--$V$ sectors give
\[
-\sum_{p,i,j,k,l,r,s}
C_{ijk}C_{lrs}
\bigl(
\delta_{pj}\delta_{lr}
-
\delta_{pr}\delta_{lj}
\bigr)
\bigl(
\delta_{pk}\delta_{is}
-
\delta_{ps}\delta_{ik}
\bigr).
\]
Expanding the two determinants gives four terms. Three contain a trace
of $C$:
\[
\sum C_{ipp}C_{lli},
\qquad
\sum C_{ipi}C_{llp},
\qquad
\sum C_{iji}C_{jpp},
\]
and hence vanish. The remaining term is
\[
\sum_{i,j,p}C_{ijp}C_{jpi}
=
\sum_{i,j,p}C_{ijp}^2
=
\kappa_n.
\]
There remain $n-2$ copies of both $a$ and $q$, so their expansion
contributes $((n-2)!)^2$. This proves
\eqref{eq:second-top-contraction}.

Finally, write $c=\sum_pW_pU_p$. Using
$\alpha=\sum C_{ijk}X_iU_jV_k$ and
$\beta=\sum C_{lrs}X_lW_rV_s$, the corresponding contraction is
\[
-\sum_{p,i,j,k,l,r,s}
C_{ijk}C_{lrs}
\bigl(
\delta_{ip}\delta_{lj}
-
\delta_{ij}\delta_{lp}
\bigr)
\bigl(
\delta_{pk}\delta_{rs}
-
\delta_{ps}\delta_{rk}
\bigr).
\]
Again three of the four terms contain a trace of $C$ and vanish. The
only surviving contraction is
\[
\sum_{p,j,k}C_{pjk}C_{jkp}
=
\sum_{p,j,k}C_{pjk}^2
=
\kappa_n.
\]
As in the preceding case, the remaining powers of $a$ and $q$
contribute $((n-2)!)^2$. This proves
\eqref{eq:third-top-contraction}.
\end{proof}

\begin{proof}[Proof of \cref{lem:uniform-pairing}]
By \eqref{eq:uniform-M} and
\cref{lem:three-top-contractions},
\begin{align*}
M_n\,a^{n-2}q^{n-2}
&=
\left((n-1)!(n-2)!+2((n-2)!)^2\right)\kappa_n\,\Omega\\
&=
n!(n-2)!\,\Omega.
\end{align*}
Using \eqref{eq:orthogonal-volume} and
\eqref{eq:volume-change}, we have
$\Omega=(-1)^n(n+1)^2\vol_n$. Therefore
\[
M_n\,a^{n-2}q^{n-2}
=
(-1)^n(n+1)^2n!(n-2)!\,\vol_n,
\]
as claimed.
\end{proof}

We can now conclude the algebraic part of the argument.

\begin{Thm}\label{thm:uniform-algebra}
For every $n\geq2$, the cdga $\cA_n$ is nonformal. More precisely,
\[
\langle b;a,b,c\rangle=\{[M_n]\},
\qquad
[M_n]\neq0.
\]
\end{Thm}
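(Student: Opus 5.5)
The plan is to combine the three ingredients already established. By \cref{cor:zero-indeterminacy}, the $a$-Massey product $\langle b;a,b,c\rangle$ is the singleton $\{[M_n]\}$, so it suffices to show $[M_n]\neq0$ in $\H^8(\cA_n)$. Nonformality of $\cA_n$ then follows from \cite[Theorem~2.10]{CFM2008}, since $0\notin\langle b;a,b,c\rangle$.

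To show $[M_n]\neq0$ I will pair against the closed class $a^{n-2}q^{n-2}$. Both $a$ and $q$ are closed by \eqref{eq:degree-two-differentials}, so this element is closed of degree $4(n-2)$, and $M_n a^{n-2}q^{n-2}$ lies in top degree $4n$. Suppose $M_n=d\eta$ for some $\eta\in\cA_n^7$. Then
\[
M_n a^{n-2}q^{n-2}=d\bigl(\eta\,a^{n-2}q^{n-2}\bigr)
\]
is exact in $\cA_n$. By \cref{lem:uniform-pairing} it equals $(-1)^n(n+1)^2n!(n-2)!\,\vol_n\neq0$.

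The step that needs an argument is that $\vol_n$ is not exact in $\cA_n$, or even in $E_n$. I will check that the top-degree component of $d$ vanishes, that is, that $d(E_n^{4n-1})=0$. Since $d$ is a derivation, it is enough to look at $d$ on monomials of degree $4n-1$, which are $\vol_n$ with one generator removed. If the missing generator is an $x$, $w$ or $u$, every factor is closed, so the differential is zero. If a $v_j$ is missing, the differential replaces the remaining $v_i$ by $b-mx_iw_i$. This term has degree two in the $x,w$ directions and so introduces extra $x$'s and $w$'s, while all $n$ of the $x$'s and all $n$ of the $w$'s are already present. The product therefore vanishes.

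An alternative is the unimodularity of the nilpotent Lie algebra: every $dv_i$ lies in the ideal generated by $x,w$ and has no $v$-component, so the trace condition holds. Either way, the image of $d$ in degree $4n$ is zero and $\vol_n$ is not a coboundary. This contradicts the displayed exactness, so $[M_n]\neq0$.

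The main obstacle has already been handled: it is the explicit constant in \cref{lem:uniform-pairing}. The remaining points are routine. One is to note that the relevant exactness is taken within the invariant subalgebra, and that exactness there implies exactness in $E_n$, so the non-exactness argument in $E_n$ applies. The other is that $n\geq2$ makes the exponents $n-2$ nonnegative.
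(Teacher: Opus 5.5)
Your proposal is correct and follows the paper's own proof: the singleton Massey product from \cref{cor:zero-indeterminacy}, pairing with the closed class $a^{n-2}q^{n-2}$ via \cref{lem:uniform-pairing}, the vanishing $d(E_n^{4n-1})=0$, and then \cite[Theorem~2.10]{CFM2008}. One slip in the vanishing step: when the omitted generator is an $x$, $w$ or $u$, the monomial still contains all the non-closed $v_i$, so ``every factor is closed'' is false; the right reason is the same collision you give in the $v$-case (for instance, a missing $x$ leaves all $n$ of the $w$'s present, and these kill the $w$-factor of every term of $dv_i$), which is how the paper argues uniformly, and your unimodularity remark also covers this case.
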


\begin{proof}
The complementary factor $a^{n-2}q^{n-2}$ is closed. We claim that the nonzero top form in \eqref{eq:uniform-pairing} is not exact even in $E_n$.

Choose independent bases in the four copies of $V$. A monomial of degree $4n-1$ omits exactly one basis generator. When its differential is taken, the only possible contribution comes from differentiating a $v$-generator. This removes one $v$ and inserts a wedge of one $x$- and one $w$-generator. Since the original monomial omitted only one generator, at least one of these two inserted factors is already present, and the resulting wedge product vanishes. Hence $dE_n^{4n-1}=0$.

It follows from \eqref{eq:uniform-pairing} that $M_n$ cannot be exact: otherwise its product with the closed class $a^{n-2}q^{n-2}$ would be exact, contradicting the nonzero top-degree identity. By \cref{cor:zero-indeterminacy}, the $a$-Massey product is therefore the nontrivial singleton $\{[M_n]\}$. The obstruction theorem \cite[Theorem~2.10]{CFM2008} now gives the nonformality of $\cA_n$.

We have proved nonformality after extension of scalars to $\C$. If the rational Sullivan model of $X_n$ were formal over $\Q$, then its scalar extension to $\C$ would be formal as well. Hence $X_n$ is nonformal over $\Q$; the same argument gives nonformality over $\R$ and $\C$.
\end{proof}

\begin{Rem}\label{rem:central-input}
For $n=1$ the differential vanishes, so $\cA_1$ is formal. There is also a dimensional obstruction to the higher-dimensional extension considered in \cite{Guan2025}. In our notation, the proposed defining system is $b^{n-2}\alpha,b^{n-2}\beta,b^{n-2}\gamma$ for the product $\langle b^{n-1};a,b,c\rangle$, and its associated representative is $b^{2n-4}M_n$.

For $n>2$ this representative vanishes identically. Indeed, every term of $M_n$ has multidegree $(2,2,2,2)$ in the four copies $(x,w,u,v)$, while $b^{2n-4}$ contributes $2n-4$ additional factors from each of the $x$- and $w$-copies. Thus every term of $b^{2n-4}M_n$ contains $2n-2>n$ factors from each of these two $n$-dimensional copies, and hence
\[
b^{2n-4}M_n=0.
\]
Consequently this dimension-dependent defining system contains the zero class and does not yield a nonformality obstruction for $n>2$. In particular, the corresponding higher-dimensional nonvanishing calculation in \cite{Guan2025} cannot hold with the stated defining system.

Our construction avoids this dimensional obstruction by keeping the central element equal to $b$ in every dimension. The class $M_n$ therefore always has degree eight, while the dependence on $n$ is absorbed by the complementary closed factor $a^{n-2}q^{n-2}$.
\end{Rem}

\section{Passage to Bogomolov--Guan manifolds}
\label{sec:geometry}

We now relate the invariant algebra studied in the preceding sections to the smooth Bogomolov--Guan manifolds. Two finite symmetric quotients occur naturally. The quotient of the Bogomolov--Guan precursor is the space on which the nonformality obstruction is constructed. Guan's original nilmanifold has a different integral lattice and gives the quotient resolved by the smooth manifold. Keeping these two spaces separate is essential.

\subsection{The Bogomolov--Guan precursor and its symmetric quotient}

Write the primary Kodaira surface as an elliptic fibration $\pi:S\to E$ with elliptic fiber $F$, put $m=n+1$, and write $d=\deg\cL=mt$. Recall that
\[
R=
\left\{
(s_1,\ldots,s_m)\in S^m:
\sum_{i=1}^m\pi(s_i)=0
\right\},
\qquad
N_n=R/F,
\qquad
X_n=N_n/S_m,
\]
where $F$ acts diagonally and $S_m$ permutes the factors. Thus $N_n$ is the restricted Bogomolov--Guan precursor of \cite[\S1.2 and Definition~4.1]{AKV2026}. As above, we call it simply the Bogomolov--Guan precursor. The diagonal $F$-action is free. The projection to the base makes $R$ an $F^m$-bundle over $\ker(E^m\to E)\cong E^{m-1}$, and hence $N_n$ is an $F^{m-1}$-bundle over $E^{m-1}$. In particular, $N_n$ is compact and connected of complex dimension $2n$.

The fact that the precursor is a complex nilmanifold is observed in \cite[Claim~4.6]{AKV2026}. We record an $S_m$-equivariant presentation adapted to the de Rham calculation. A primary Kodaira surface admits a presentation $S=\Gamma_S\backslash G_S$, where $G_S\cong H_3(\R)\times\R$ is simply connected and two-step nilpotent and $\Gamma_S\subset G_S$ is a cocompact lattice \cite[\S2]{Hasegawa2005}. The elliptic fibration is induced by a surjective homomorphism $\widetilde\pi:G_S\to\R^2$ mapping $\Gamma_S$ onto the lattice defining $E$. Define
\[
G_R := \ker\left(G_S^m\xrightarrow{\ \Sigma\widetilde\pi\ }\R^2\right).
\]
Its Lie algebra is defined by rational linear equations, so $\Gamma_R:=\Gamma_S^m\cap G_R$ is a cocompact lattice in $G_R$ by Mal'cev's theorem \cite[Chapter~II, Theorem~2.12]{Raghunathan1972}. Thus $R\cong\Gamma_R\backslash G_R$. The elliptic fiber $F$ is induced by the two-dimensional centre $Z_S\subset G_S$. Its diagonal copy $Z_\Delta\subset G_R$ is rational, and therefore
\[
G_N:=G_R/Z_\Delta,
\qquad
\Gamma_N:=\Gamma_R/(\Gamma_R\cap Z_\Delta)
\]
give
\[
N_n\cong\Gamma_N\backslash G_N.
\]
All constructions are preserved by permutation of the factors.

Choose a left-invariant complex coframe $x,\zeta$ on $S$ with $dx=0$ and $d\zeta=x\bar x$. Pulling back to $S^m$ and restricting to $R$ gives $\sum_i x_i=0$. Define
\[
y_i=\sum_{j=1}^m\zeta_j-m\zeta_i.
\]
These forms are basic for the diagonal $F$-action and satisfy $\sum_i y_i=0$. If $b=\sum_jx_j\bar x_j$, then $dy_i=b-mx_i\bar x_i$ and $d\bar y_i=-dy_i$. Set
\[
w_i=\bar x_i,
\qquad
u_i=y_i+\bar y_i,
\qquad
v_i=\frac{y_i-\bar y_i}{2}.
\]
Then $x_i,w_i,u_i$ are closed, $dv_i=b-mx_iw_i$, and each of the four families has sum zero. Thus the complexified invariant coframe of $N_n$ consists of four copies of the standard representation $V$ of $S_m$, and its invariant de Rham algebra is precisely $(E_n,d)$ of \cref{sec:algebra}.

\begin{Prop}\label{prop:geometric-model}
The cdga $\cA_n=(E_n,d)^{S_m}$ is a model of $X_n$ over $\C$. Moreover, $X_n$ is a rational homology manifold of real dimension $4n$, hence a rational Poincar\'e duality space, and
\[
b_2(X_n)=5,
\qquad
b_3(X_n)=0.
\]
\end{Prop}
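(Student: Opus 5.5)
The plan is to chain three facts: Nomizu's theorem for $N_n$, the finite-quotient lemma \cref{lem:finite-quotient}, and the low-degree computations of \cref{sec:algebra}.

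\textbf{Step 1: a model of $X_n$.} By the presentation $N_n\cong\Gamma_N\backslash G_N$ above, $N_n$ is a compact nilmanifold. Nomizu's theorem says the inclusion of left-invariant forms $\Lambda\mathfrak g_N^\vee\hookrightarrow\Omega^*(N_n)$ is a quasi-isomorphism. After complexification, the coframe computation just carried out identifies $\Lambda\mathfrak g_{N,\C}^\vee$ with $(E_n,d)$.

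\textbf{Step 2: equivariance and passage to invariants.} The $S_m$-action on $N_n$ is induced by permuting factors of $G_S^m$, which preserves $G_R$, $\Gamma_R$ and $Z_\Delta$. So $S_m$ acts by automorphisms of $(G_N,\Gamma_N)$ and preserves left-invariant forms. The coframe $x_i,\zeta_i$ is permuted accordingly, hence so are $y_i$ and the four families $x,w,u,v$. Therefore the Nomizu inclusion is $S_m$-equivariant, and its restriction to the invariant forms on both sides is an $S_m$-equivariant cdga quasi-isomorphism. Since $S_m$ is finite and we work in characteristic zero, taking invariants is exact, so
\[
(E_n,d)^{S_m}\to\Omega^*(N_n;\C)^{S_m}
\]
is a quasi-isomorphism. \Cref{lem:finite-quotient} says the target is a cdga model of $X_n$, which gives the first claim.

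\textbf{Step 3: Poincar\'e duality.} I apply the second half of \cref{lem:finite-quotient}, which requires that $S_m$ preserve orientation. Here $S_m$ acts holomorphically on $N_n$: permuting the factors of $S^m$ is holomorphic and commutes with the diagonal $F$-action. Holomorphic maps preserve the complex orientation, so $X_n$ is a rational Poincar\'e duality space of dimension $4n$. To get the stronger statement that $X_n$ is a rational homology manifold, I would use the standard fact that a quotient of a manifold by a finite orientation-preserving group is a rational homology manifold. The local link at an image point is $S^{4n-1}/G_x$, where the isotropy group $G_x$ acts linearly and orientation-preservingly on the tangent sphere, so this link has the rational homology of $S^{4n-1}$.

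\textbf{Step 4: Betti numbers.} The model from Step 2 gives $\H^*(X_n;\C)\cong\H^*(\cA_n)$. Then \eqref{eq:low-cohomology} yields $b_2(X_n)=5$, and \cref{lem:uniform-H3} yields $b_3(X_n)=0$. Rational Betti numbers agree with complex ones by the universal coefficient theorem.

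\textbf{Main obstacle.} The delicate point is the equivariance in Step 2. One must check that the permutation action on $N_n$ really is induced by Lie-group automorphisms preserving the lattice, so that it acts on left-invariant forms compatibly with the identification with $E_n$. The subtlety is that the forms $y_i$ are built from sums of the $\zeta_j$ and are only basic for the diagonal $F$-action. The check I would try first is to verify directly that a permutation $\sigma$ sends $y_i$ to $y_{\sigma(i)}$; everything else is formal.
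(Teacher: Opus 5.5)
Your proposal is correct and follows the paper's proof essentially step for step. You use Nomizu's theorem for $N_n$, then $S_m$-equivariance with exactness of finite-group invariants and \cref{lem:finite-quotient} for the model. The local link $S^{4n-1}/G$ gives the rational homology manifold property, and \eqref{eq:low-cohomology} with \cref{lem:uniform-H3} gives the Betti numbers.

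The only cosmetic difference is that you get Poincar\'e duality directly from the second half of \cref{lem:finite-quotient}, using that the holomorphic $S_m$-action preserves orientation. The paper instead deduces it from the homology-manifold property; both routes are fine.
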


\begin{proof}
By Nomizu's theorem \cite{Nomizu1954}, the inclusion $E_n\hookrightarrow\Omega^*(N_n;\C)$ is a quasi-isomorphism. It is $S_m$-equivariant, so exactness of finite-group invariants and \cref{lem:finite-quotient} identify $\cA_n$ as a cdga model of $X_n$. Locally, $X_n$ is the quotient of a ball in $\C^{2n}$ by a finite complex-linear group $G$. The link is therefore $S^{4n-1}/G$. Since $G$ preserves the complex orientation, \cref{lem:finite-quotient} applied to the sphere gives
\[
\H^*(S^{4n-1}/G;\Q)
\cong
\H^*(S^{4n-1};\Q)^G
=
\H^*(S^{4n-1};\Q).
\]
Thus every local link is a rational homology sphere, so $X_n$ is a rational homology manifold; equivalently $\IC_{X_n}\cong\Q_{X_n}[2n]$. Finally, \cref{eq:low-cohomology,lem:uniform-H3} give the stated Betti numbers.
\end{proof}

\subsection{The root-lattice cover and Guan's nilmanifold}

The precursor $N_n$ should not be identified with Guan's nilmanifold $M_{n,t}$. They have the same local nilpotent geometry but different natural integral lattices. We now make their relation precise.

Let
\[
A_n=\left\{(a_1,\ldots,a_m)\in\Z^m:\sum_i a_i=0\right\},
\qquad
P_n=\Z^m/\Z(1,\ldots,1).
\]
The first is the root lattice of type $A_n$ and the second is naturally its weight lattice. The map $A_n\to P_n$, $a\mapsto[a]$, is $S_m$-equivariant and has index $m$.

\begin{Prop}\label{prop:guan-precursor-cover}
Assume $d=\deg\cL=mt$. Then Guan's nilmanifold $M_{n,t}$ admits a natural $S_m$-equivariant finite covering
\[
\rho:M_{n,t}\longrightarrow N_n
\]
of degree $m^2=(n+1)^2$. Consequently there is a commutative diagram
\begin{equation}\label{eq:guan-precursor-diagram}
\begin{tikzcd}
M_{n,t} \arrow[r, "\rho"] \arrow[d] & N_n \arrow[d] \\
Y_{n,t}:=M_{n,t}/S_m \arrow[r, "\bar\rho"'] & X_n:=N_n/S_m
\end{tikzcd}
\end{equation}
where $\bar\rho$ is finite of degree $m^2$.
\end{Prop}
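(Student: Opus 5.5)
I will realize both nilmanifolds as quotients $\Gamma\backslash G_N$ of the \emph{same} simply connected nilpotent Lie group $G_N=G_R/Z_\Delta$, equipped with the same $S_m$-action, and show that Guan's lattice $\Gamma_M$ is an $S_m$-stable sublattice of $\Gamma_N$ of index $m^2$. The map $\rho$ will then be the natural projection $\Gamma_M\backslash G_N\to\Gamma_N\backslash G_N$. This is a covering of degree $[\Gamma_N:\Gamma_M]$, and it is $S_m$-equivariant because both lattices are permutation-stable.

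\textbf{Step 1: structure of $\Gamma_N$.} I first recall Guan's description of $M_{n,t}$. In the fibre directions it is a quotient by the root lattice $A_n\otimes\Lambda_F$, where $\Lambda_F\cong\Z^2$ is the lattice of $F$. Over the base it uses the same lattice as $N_n$, namely $\ker(\Lambda_E^m\to\Lambda_E)$. The central extension cocycle is the one induced from $\Gamma_S$, and the hypothesis $d=mt$ is exactly what makes this cocycle, restricted to $A_n\otimes\Lambda_F$, integral.

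On the precursor side, $\Gamma_N$ fits into a central extension
\[
1\to (\Lambda_F^m\cap Z)/\Lambda_F^{\Delta}\to\Gamma_N\to \ker(\Lambda_E^m\to\Lambda_E)\to 1,
\]
whose kernel is $\Lambda_F^m/\Delta\Lambda_F\cong P_n\otimes\Lambda_F$. The map
\[
A_n\otimes\Lambda_F\hookrightarrow P_n\otimes\Lambda_F,\qquad a\mapsto[a],
\]
is injective and $S_m$-equivariant, with cokernel $(\Z/m)^2$. Here I use that the composition $A_n\to\Z^m\to P_n$ has cokernel $\Z^m/(A_n+\Z\mathbf 1)\cong\Z/m$.

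\textbf{Step 2: the main obstacle.} The real work is to check that the central extensions are compatible. I must find a lift of the base lattice into $G_N$ whose commutators land in $A_n\otimes\Lambda_F$ rather than only in $P_n\otimes\Lambda_F$, so that $\Gamma_M:=\langle\text{lift},\,A_n\otimes\Lambda_F\rangle$ is a subgroup of $\Gamma_N$ isomorphic to Guan's lattice.

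I would first compute the commutator of two base lattice elements in $G_S^m$. Each factor contributes a central element proportional to $d$, and because of the $y_i=\sum_j\zeta_j-m\zeta_i$ normalization the image in $G_N$ picks up a factor of $m$ relative to the diagonal. I expect to show, using $d=mt$, that the commutator lies in $m\cdot P_n\otimes\Lambda_F\subset A_n\otimes\Lambda_F$. The inclusion $mP_n\subset A_n$ holds because $m[a]=[ma-(\sum a_i)\mathbf 1]$. I would also check that the $\Gamma_S$-lifts can be chosen permutation-compatibly, for instance by taking identical lifts in each factor.

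\textbf{Step 3: degree and the quotient diagram.} Since both lattices have the same image $\ker(\Lambda_E^m\to\Lambda_E)$ in the base and their fibre parts have index $m^2$, the index is
\[
[\Gamma_N:\Gamma_M]=[P_n\otimes\Lambda_F:A_n\otimes\Lambda_F]=m^2.
\]
Finally, $\rho$ commutes with $S_m$, so it descends to a map $\bar\rho:Y_{n,t}\to X_n$ making the square \eqref{eq:guan-precursor-diagram} commute. The map $\bar\rho$ is finite since $\rho$ is. Its degree is $m^2$: a generic point of $X_n$ has a free $S_m$-orbit of preimages upstairs, so the fibre of $\bar\rho$ over it has exactly $m^2$ points.
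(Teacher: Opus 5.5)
Your proposal is correct and is essentially the paper's argument, phrased in terms of lattices in the common group $G_N$ rather than torus bundles. Your index $[P_n\otimes\Lambda_F:A_n\otimes\Lambda_F]=m^2$ is the paper's fiberwise isogeny $K_F\to F^m/F_\Delta$ with kernel $F[m]$, and your check that commutators of lifts are $d=mt$ times an integral vector, hence lie in $mP_n\otimes\Lambda_F\subset A_n\otimes\Lambda_F$, is the paper's integrality condition on $C=dG^{-1}$; the normalization $y_i=\sum_j\zeta_j-m\zeta_i$ plays no role there.

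Beyond this, the paper writes out $C=t(mI_n-\mathbf 1\mathbf 1^{\,t})$ explicitly to match Guan's definition, which you take as given. For $S_m$-stability, your identical-factor lifts should come from a section $\Lambda_E\to\Gamma_S$ whose cocycle is divisible by $d$ (e.g.\ products of powers of two fixed lifts of a basis of $\Lambda_E$), since an arbitrary section can introduce fiber elements outside $A_n\otimes\Lambda_F$.
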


\begin{proof}
Let $\Lambda_F$ be the lattice of the elliptic curve $F$, and set $B:=\ker(E^m\xrightarrow{\Sigma}E)$. Over $B$, the fiber of the Bogomolov--Guan precursor is $F^m/F_\Delta$, whose lattice is $P_n\otimes\Lambda_F$. On the other hand, if $K_F:=\ker(F^m\xrightarrow{\Sigma}F)$, then the lattice of $K_F$ is $A_n\otimes\Lambda_F$. The natural inclusion $A_n\hookrightarrow P_n$ therefore induces an isogeny $K_F\to F^m/F_\Delta$. Its kernel is $K_F\cap F_\Delta=F[m]$, so this isogeny has degree $\#F[m]=m^2$.

We now determine when this fiberwise isogeny lifts to the total spaces. The principal elliptic bundle $S\to E$ has Chern class $d=\deg\cL$. Restricting the product bundle over $E^m$ to $B$ and quotienting by $F_\Delta$ gives the $P_n\otimes\Lambda_F$-valued extension class defining $N_n$. Choose the standard root basis $\alpha_i=e_i-e_m$, for $1\le i\le n$, of $A_n$, and the quotient basis $\bar e_i=[e_i]$ of $P_n$. Since $\bar e_m=-\sum_{j=1}^n\bar e_j$, the inclusion $A_n\hookrightarrow P_n$ is represented by
\begin{equation}\label{eq:root-weight-matrix}
G=I_n+\mathbf 1\mathbf 1^{\,t},
\qquad
\det G=m,
\qquad
G^{-1}=I_n-\frac1m\mathbf 1\mathbf 1^{\,t}.
\end{equation}
In the quotient basis, the extension class of the precursor is represented by $dI_n$. Hence a lift to the root lattice is equivalent to finding an integral matrix $C$ satisfying $GC=dI_n$, or equivalently $C=dG^{-1}$. By \eqref{eq:root-weight-matrix}, this matrix is integral precisely when $m\mid d$. Writing $d=mt$, one obtains
\begin{equation}\label{eq:guan-extension-matrix}
C=t\bigl(mI_n-\mathbf 1\mathbf 1^{\,t}\bigr).
\end{equation}
Thus $C$ has diagonal entries $(m-1)t$ and off-diagonal entries $-t$. Up to the choice of root basis and the corresponding sign convention for the central generators, this is exactly the integral central-extension matrix in Guan's definition of $M_{n,t}$; compare \cite{GuanII1995,GuanIII1995}. Therefore the root-lattice lift of the precursor is Guan's nilmanifold $M_{n,t}$. The same computation also makes transparent why the arithmetic condition in Bogomolov's reinterpretation is precisely $m\mid d$; see \cite{Bogomolov1996}.

Finally, the entire construction is $S_m$-equivariant. Since the fiberwise kernel is $F[m]$, the resulting covering $\rho:M_{n,t}\to N_n$ has degree $m^2$. Because the $S_m$-action commutes with $\rho$, the map descends to the quotients and gives the lower horizontal map in \eqref{eq:guan-precursor-diagram}, again of generic degree $m^2$.
\end{proof}

\begin{Rem}\label{rem:same-structure-equations}
The covering in \cref{prop:guan-precursor-cover} explains why the precursor and Guan's nilmanifold have the same structure equations in the de Rham calculation. Passing from $M_{n,t}$ to $N_n$ changes the cocompact lattice inside the simply connected nilpotent Lie group but not its real Lie algebra. Equivalently, the root lattice $A_n\otimes\Lambda_F$ is replaced by the commensurable weight lattice $P_n\otimes\Lambda_F$. Maurer--Cartan equations are Lie-algebraic and therefore do not see this change of lattice. The parameter $t\neq0$ only rescales the nonclosed central directions and can be normalized after extending scalars to $\R$ or $\C$. Hence, $S_m$-equivariantly, the invariant de Rham algebra of $M_{n,t}$ is isomorphic over $\C$ to the same cdga $(E_n,d)$ used for the precursor.

This is exactly the level at which the two constructions agree. Their natural compact nilmanifolds should nevertheless be kept distinct: the integral lattices are different, and the canonical relation between them is the finite covering \eqref{eq:guan-precursor-diagram}, not an identification.
\end{Rem}

\begin{Cor}\label{cor:guan-quotient-model}
For every $t\neq0$, the cdga $\cA_n$ is also a model of $Y_{n,t}=M_{n,t}/S_m$ over $\C$. In particular,
\[
b_2(Y_{n,t})=5,
\qquad
b_3(Y_{n,t})=0.
\]
\end{Cor}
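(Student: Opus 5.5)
The plan is to repeat the argument of \cref{prop:geometric-model}, with $M_{n,t}$ in place of $N_n$. The main input is \cref{rem:same-structure-equations}, which we make precise in three steps.

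\emph{Step 1: the Lie-algebraic model.} By \cref{prop:guan-precursor-cover}, $\rho:M_{n,t}\to N_n$ is a finite $S_m$-equivariant covering of nilmanifolds. Hence $M_{n,t}=\Gamma'\backslash G_N$ for a finite-index sublattice $\Gamma'\subset\Gamma_N$ of the same simply connected nilpotent group $G_N$. Left-invariant forms on $G_N$ descend to both quotients. The invariant coframe $x_i,w_i,u_i,v_i$ constructed above therefore pulls back under $\rho$ to an $S_m$-equivariant invariant coframe on $M_{n,t}$ satisfying the same structure equations $dx_i=dw_i=du_i=0$ and $dv_i=b-mx_iw_i$.

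The thing to be careful about is the $S_m$-action on $M_{n,t}$. It is the one induced by permuting factors, and it commutes with $\rho$. So the pulled-back coframe carries exactly the same representation, namely four copies of $V$. Consequently $\rho^*:(E_n,d)\to\Omega^*(M_{n,t};\C)$ is an $S_m$-equivariant injection of cdgas onto the left-invariant forms. The parameter $t$ plays no role here: we never need Guan's normalization of the central directions, because the coframe is simply pulled back.

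\emph{Step 2: Nomizu plus invariants.} Nomizu's theorem \cite{Nomizu1954}, applied to the nilmanifold $\Gamma'\backslash G_N$, shows that this inclusion is a quasi-isomorphism. Taking $S_m$-invariants is exact, so $\cA_n=(E_n,d)^{S_m}\to\Omega^*(M_{n,t};\C)^{S_m}$ is again a quasi-isomorphism. \Cref{lem:finite-quotient}, applied to the smooth action of $S_m$ on the compact connected manifold $M_{n,t}$, identifies the target as a cdga model of $Y_{n,t}$. Therefore $\cA_n$ models $Y_{n,t}$ over $\C$.

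\emph{Step 3: Betti numbers.} Once $\cA_n$ is a model, $\H^*(Y_{n,t};\C)\cong\H^*(\cA_n)$. The values $b_2=5$ and $b_3=0$ then follow from \eqref{eq:low-cohomology} and \cref{lem:uniform-H3}. Rational Betti numbers agree with complex ones by the universal coefficient theorem.

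The only step needing genuine care is the first one: checking that $M_{n,t}$ really is a quotient of the same group $G_N$ by a sublattice, with a compatible $S_m$-action. Rather than comparing Guan's structure constants directly, this should be read off from the fact that $\rho$ is an equivariant covering map between nilmanifolds. The deck-type lift to universal covers is a group isomorphism by Mal'cev rigidity, and it intertwines the $S_m$-actions because both actions come from permuting factors.
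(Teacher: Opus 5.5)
Your proposal is correct and follows essentially the paper's route. Like the paper, you identify the $S_m$-equivariant invariant algebra of $M_{n,t}$ with $(E_n,d)$ via the covering of \cref{prop:guan-precursor-cover}, apply Nomizu's theorem, exactness of invariants and \cref{lem:finite-quotient}, and read off $b_2,b_3$ from \eqref{eq:low-cohomology} and \cref{lem:uniform-H3}. Pulling the coframe back along $\rho$ is a clean way to avoid the normalization of $t$ used in \cref{rem:same-structure-equations}.

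Your closing appeal to Mal'cev rigidity is not needed, and as phrased it is slightly inaccurate: the lift of an arbitrary covering map need not be a homomorphism. Covering-space theory alone identifies $\rho$ with $\Gamma'\backslash G_N\to\Gamma_N\backslash G_N$ for $\Gamma'=\rho_*\pi_1(M_{n,t})$, and the equivariance of $\rho$ already makes $\rho^*E_n$ stable under $S_m$.
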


\begin{proof}
By \cref{rem:same-structure-equations}, the $S_m$-equivariant invariant de Rham algebra of $M_{n,t}$ is $(E_n,d)$ after extension of scalars to $\C$. Apply Nomizu's theorem and \cref{lem:finite-quotient}, exactly as in the proof of \cref{prop:geometric-model}.
\end{proof}

\subsection{The map of nonzero degree}

The Hilbert--Chow morphism restricts to a proper map $W\to R/S_m$ and is equivariant for the elliptic action. Since the $F$- and $S_m$-actions commute, it descends to a bimeromorphic map
\[
h:W/F\longrightarrow X_n.
\]
Bogomolov's construction gives a finite morphism $p:Q\to W/F$ of degree $m^2$, see \cite{Bogomolov1996} and \cite[Theorem~3.3]{BKKY2022}. Therefore
\begin{equation}\label{eq:degree-map}
g=h\circ p:Q\longrightarrow X_n,
\qquad
g_*[Q]=m^2[X_n].
\end{equation}
Thus $g$ has nonzero degree. Notice that this map lands in the precursor quotient $X_n$ and this is the map used to transfer nonformality.

\subsection{The third Betti number}

For the computation of the third cohomology group we instead use Guan's quotient $Y_{n,t}$. Guan's original construction produces a smooth compact simply connected holomorphic symplectic manifold $Q_{n,t}$ together with a Hilbert-type resolution
\begin{equation}\label{eq:guan-resolution}
f:Q_{n,t}\longrightarrow Y_{n,t}=M_{n,t}/S_m.
\end{equation}
Bogomolov's construction gives the same family of examples, with the divisibility condition $d=mt$ accounting for the parameter $t$; see \cite{Bogomolov1996} and the comparison in \cite[Theorem~3.3 and Remark~3.4]{BKKY2022}. We therefore for these computations implicitly take the distinguished representative $Q$ associated with the data above to be $Q_{n,t}$ when discussing the resolution.

\begin{Lem}\label{lem:guan-semismall}
The map $f:Q_{n,t}\to Y_{n,t}$ is a projective semismall resolution. If $S_\lambda\subset Y_{n,t}$ is the stratum associated with a partition $\lambda=(\lambda_1,\ldots,\lambda_\ell)$ of $m$, then for a general $s\in S_\lambda$,
\begin{equation}\label{eq:semismall-strata}
\codim_{\C}S_\lambda
=2\bigl(m-\ell(\lambda)\bigr),
\qquad
\dim_{\C}f^{-1}(s)
=m-\ell(\lambda).
\end{equation}
In particular, the unique codimension-two type is $\lambda=(2,1,\ldots,1)$.
\end{Lem}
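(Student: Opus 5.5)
The proof will be local over $Y_{n,t}$: we reduce to the classical Hilbert--Chow morphism of a smooth surface. Guan's construction is built fiberwise. The resolution $f$ is obtained from the relative Hilbert scheme of $m$ points on the fibers of the nilmanifold structure, restricted to the locus where the points sum to zero and taken modulo the central $F$-action. Near a point $s\in Y_{n,t}$ in the stratum $S_\lambda$, we will choose an $S_m$-stable neighbourhood of a preimage in $M_{n,t}$ modeled on a product of $\ell(\lambda)$ small discs in $\C^2$ on which the stabilizer $S_{\lambda_1}\times\cdots\times S_{\lambda_\ell}$ acts. The relevant stabilizer comes from the $S_m$-action on a product of copies of a smooth surface germ. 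Transversal to the stratum, $f$ is then analytically isomorphic to a product of punctual Hilbert--Chow morphisms
\[
\prod_{j=1}^{\ell}\bigl((\C^2)^{[\lambda_j]}\to \Sym^{\lambda_j}\C^2\bigr),
\]
with the centre-of-mass and zero-sum directions split off as a smooth factor. For this we need local product charts for the fibration $M_{n,t}\to B$, compatible with the group action; these exist because the $S_m$-action is by holomorphic automorphisms of the nilmanifold.

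The dimension statements then follow from the local model. The orbit type of $\lambda$ has $\ell(\lambda)$ distinct points in the surface directions, so before imposing the two sum-zero constraints and the $F$-quotient it has complex dimension $2\ell$. The ambient symmetric product has dimension $2m$. Both $S_\lambda$ and $Y_{n,t}$ lose the same number of dimensions to these constraints, so $\codim_\C S_\lambda=2(m-\ell)$. By Briançon's theorem, the punctual Hilbert scheme of length $k$ at a point of a smooth surface is irreducible of dimension $k-1$. Summing over the blocks gives
\[
\dim f^{-1}(s)=\sum_j(\lambda_j-1)=m-\ell .
\]
Hence $2\dim f^{-1}(s)=\codim S_\lambda$, which is precisely semismallness (in fact with equality on every stratum). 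For the last assertion, codimension two means $m-\ell=1$, which forces $\lambda=(2,1,\ldots,1)$.

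Two properties remain. First, $f$ is a resolution: $Q_{n,t}$ is smooth, and $f$ is an isomorphism over the free stratum $\lambda=(1,\ldots,1)$, which is dense. Second, $f$ is projective. I expect this to be the main obstacle, because $Q_{n,t}$ is non-Kähler, so global projectivity is not available and only relative projectivity can hold. I would first prove that the relative Hilbert scheme of a smooth projective morphism of relative dimension two (here fibers are $F$-translates, or more precisely we work with the family of surfaces over $B$) is projective over the relative symmetric product. This follows from Grothendieck's construction relative to the base: the Hilbert--Chow morphism is projective, via the determinant line bundle of the tautological bundle, which is relatively ample. It then remains to check that this relatively ample bundle restricts to the zero-sum locus and descends through the free $F$-quotient. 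Descent works because $F$ acts compatibly on the tautological bundle and the quotient is by a free action. Alternatively, if that is delicate, I would use that $f$ is locally (over $Y_{n,t}$) a product of Hilbert--Chow morphisms. Each such morphism is a blow-up of an ideal sheaf in the relative setting, so $f$ is projective over small opens. The globalization step, gluing a relatively ample line bundle such as $\cO(-\tfrac12 E)$ with $E$ the exceptional divisor, is exactly what I expect to require the most care.
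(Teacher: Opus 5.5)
\emph{Semismall part.} Your treatment of the strata, the codimension count, the fibre dimension via Brian\c{c}on, and semismallness is essentially the paper's argument. The paper gets $\dim S_\lambda=2(\ell-1)$ from the fixed subspace of $S_{\lambda_1}\times\cdots\times S_{\lambda_\ell}$ on $V\oplus V$, which is the same count as your $\ell$ distinct points minus the two constraints.

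\emph{Projectivity: first route.} The gap is projectivity, which you do not actually establish, and your first route fails as stated for two reasons.
\begin{enumerate}
\item The source of $f$ is not the zero-sum Hilbert-scheme locus $W$ divided by $F$. The quotient $W/F$ lies over $X_n$, not over $Y_{n,t}$, and $Q_{n,t}\to W/F$ is the finite map of degree $m^2$.
\item $F$ does not act freely on $W$. If $\varepsilon\in F$ has order $m$ and $p\in S$ lies over $0\in E$, the reduced subscheme $\{p,p+\varepsilon,\ldots,p+(m-1)\varepsilon\}$ lies in $W$ and is fixed by $\varepsilon$. So descent of the tautological determinant ``because the quotient is free'' is unavailable.
\end{enumerate}
In addition, the relevant Hilbert scheme is the Douady space of the non-K\"ahler surface $S$, not a relative Hilbert scheme of a projective family of surfaces. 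Grothendieck's projective construction is therefore not the right tool; note that the fibres of $M_{n,t}\to B$ are $n$-dimensional tori.

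\emph{Projectivity: second route.} Your second route is the paper's. However, the globalization step you flag as the main difficulty is exactly the missing idea, and it needs no gluing.
\begin{itemize}
\item Let $D\subset Q_{n,t}$ be the reduced exceptional divisor. It is a global Cartier divisor because $Q_{n,t}$ is smooth.
\item On each local product chart, $D$ is the sum of the pullbacks of the boundary divisors $B_r$ of the factors $\operatorname{Hilb}^r(\C^2)\to\Sym^r(\C^2)$.
\item $\cO(-B_r)$ is already relatively ample. Haiman's relative $\operatorname{Proj}$ gives $\cO(1)\cong\det\cO^{[r]}$, and Lehn's formula $[B_r]=-2c_1(\det\cO^{[r]})$ then gives $\cO(-B_r)\cong\cO(2)$. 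So you never need the half-divisor $\cO(-\tfrac12E)$, and no square root has to be constructed or glued.
\item Relative ampleness for a proper map of complex analytic spaces is local on the base. Hence $\cO(-D)$ is $f$-ample, and $f$ is projective.
\end{itemize}
``Projective over small opens,'' for instance via the blow-up description, does not by itself give a global statement. What makes the argument global is that the line bundle $\cO(-D)$ is defined once on all of $Q_{n,t}$, and only its ampleness is checked chart by chart.
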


\begin{proof}
Guan's construction \cite{GuanII1995,GuanIII1995} resolves the local symmetric-product singularities by the corresponding Hilbert--Chow resolutions. The complexified tangent representation of $S_m$ on $M_{n,t}$ is two copies of the standard representation $V$. If the stabilizer of a general point of $S_\lambda$ is $S_{\lambda_1}\times\cdots\times S_{\lambda_\ell}$, then its fixed subspace in $V$ has dimension $\ell-1$. Hence
\[
\dim_\C S_\lambda=2(\ell-1),
\qquad
\codim_\C S_\lambda=2(m-\ell).
\]
Transversely, the resolution is the product of punctual Hilbert--Chow fibers
\[
f^{-1}(s)\cong\prod_{i=1}^{\ell}\operatorname{Hilb}^{\lambda_i}_0(\C^2).
\]
By Brian\c{c}on's theorem \cite{Briancon1977}, the $i$-th factor is irreducible of dimension $\lambda_i-1$. Therefore
\[
\dim_\C f^{-1}(s)=\sum_i(\lambda_i-1)=m-\ell,
\]
which proves semismallness.

We also need projectivity of the morphism, not merely the local Hilbert--Chow description. Let $D\subset Q_{n,t}$ be the reduced exceptional divisor. Since $Q_{n,t}$ is smooth, $D$ is Cartier. For the standard Hilbert--Chow morphism
\[
\rho_r:\operatorname{Hilb}^r(\C^2)\to\operatorname{Sym}^r(\C^2),
\]
Haiman's relative $\operatorname{Proj}$ construction \cite[Proposition~2.6]{Haiman1998} gives a relatively ample line bundle $\cO(1)$, identified with the determinant of the tautological bundle in \cite[Proposition~2.12]{Haiman1998}. If $B_r$ denotes the boundary divisor, then \cite[Lemma~3.7]{Lehn1999} gives
\[
[B_r]=-2c_1\bigl(\det\cO_{\C^2}^{[r]}\bigr),
\]
so with Haiman's normalization $\cO(-B_r)\cong\cO(2)$ and is $\rho_r$-ample. On each local product chart for $f$, the restriction of $D$ is the sum of the pullbacks of the boundary divisors of the Hilbert--Chow factors. Hence $\cO_{Q_{n,t}}(-D)$ is relatively ample on each such chart. Relative ampleness for proper morphisms of complex analytic spaces is local on the base \cite[Definition~2.28]{FujinoMMP}; therefore $\cO_{Q_{n,t}}(-D)$ is $f$-ample and $f$ is projective. In particular, it is cohomologically projective in the sense required by Saito's decomposition theorem \cite[Theorem~(0.3)]{Saito1990}.
\end{proof}

Let $\Sigma_{n,t}\subset Y_{n,t}$ be the codimension-two transposition stratum, and let
\[
\nu:Z_{n,t}\longrightarrow\overline{\Sigma}_{n,t}
\]
be the normalization of its closure. Fix $\tau=(12)\in S_m$ and put $F_\tau=M_{n,t}^\tau$. Let $F_\tau^\circ\subset F_\tau$ be the open subset of points whose stabilizer is exactly $\langle\tau\rangle$, and let $F_\tau^{\mathrm{rel}}$ be the union of the connected components meeting $F_\tau^\circ$.

\begin{Lem}\label{lem:transposition-normalization}
The quotient map $M_{n,t}\to Y_{n,t}$ induces a finite birational morphism
\[
F_\tau^{\mathrm{rel}}/C(\tau)
\longrightarrow
\overline{\Sigma}_{n,t}.
\]
Consequently,
\[
Z_{n,t}\cong F_\tau^{\mathrm{rel}}/C(\tau).
\]
\end{Lem}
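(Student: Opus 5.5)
The plan is to analyse the restriction of the quotient map $\pi:M_{n,t}\to Y_{n,t}$ to the fixed locus $F_\tau$, and then invoke the universal property of normalization. Four steps are needed.

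\emph{Step 1: locate $\Sigma_{n,t}$.} A point of $Y_{n,t}$ lies in $\Sigma_{n,t}$ exactly when its preimages have stabilizer conjugate to $\langle\tau\rangle$. Hence $\pi^{-1}(\Sigma_{n,t})=\bigcup_{g\in S_m} g\,F_\tau^\circ$, and so $\pi(F_\tau^\circ)=\Sigma_{n,t}$.

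\emph{Step 2: closure and the induced map.} By \cref{lem:guan-semismall}, $F_\tau$ is a compact complex submanifold of complex dimension $2n-2$, the fixed locus of a holomorphic involution. The open set $F_\tau^\circ$ is dense in $F_\tau^{\mathrm{rel}}$, since its complement there is the union of the proper fixed loci of larger stabilizers. Because $\pi$ is proper, $\pi(F_\tau^{\mathrm{rel}})=\overline{\Sigma}_{n,t}$. The centralizer $C(\tau)$ preserves $F_\tau$, $F_\tau^\circ$ and $F_\tau^{\mathrm{rel}}$, so $\pi$ factors through a map
\[
\phi:F_\tau^{\mathrm{rel}}/C(\tau)\to\overline{\Sigma}_{n,t}.
\]
This map is finite, being a proper map between finite quotients with finite fibres.

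\emph{Step 3: birationality.} Suppose $x,y\in F_\tau^\circ$ satisfy $\pi(x)=\pi(y)$, so $y=gx$ for some $g\in S_m$. Then $\operatorname{Stab}(y)=g\langle\tau\rangle g^{-1}$ must equal $\langle\tau\rangle$, so $g\in N(\langle\tau\rangle)=C(\tau)$. Hence $\phi$ is injective over $\Sigma_{n,t}$, which is dense open. It is therefore bijective over a dense open subset, and hence bimeromorphic between reduced irreducible spaces. Irreducibility of $\overline{\Sigma}_{n,t}$ follows because it is the image of $F_\tau^{\mathrm{rel}}$ and $C(\tau)$ permutes the relevant components. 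I expect this permutation claim to be the main obstacle, together with checking the components of $F_\tau^{\mathrm{rel}}$ and whether the $C(\tau)$-action is transitive on them. For this I would use the explicit nilmanifold description: $F_\tau$ is a union of translates of the sub-nilmanifold with $s_1=s_2$. If $\overline{\Sigma}_{n,t}$ is not irreducible, then normalization is taken componentwise and the argument works component by component anyway.

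\emph{Step 4: normality.} $F_\tau^{\mathrm{rel}}$ is smooth, and the quotient of a normal complex space by a finite group is normal. So $F_\tau^{\mathrm{rel}}/C(\tau)$ is a normal complex space with a finite bimeromorphic map $\phi$ onto $\overline{\Sigma}_{n,t}$. By the universal property of normalization, $\phi$ factors through $\nu$, and a finite bimeromorphic map between normal spaces is an isomorphism, by Zariski's main theorem in the analytic setting. This gives $Z_{n,t}\cong F_\tau^{\mathrm{rel}}/C(\tau)$.
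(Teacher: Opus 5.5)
Your proposal is correct and follows essentially the same route as the paper. Both arguments restrict the finite quotient map to $F_\tau^{\mathrm{rel}}$, descend along $C(\tau)$, obtain generic injectivity from $\operatorname{Stab}(y)=g\langle\tau\rangle g^{-1}=\langle\tau\rangle\Rightarrow g\in C(\tau)$, and use normality of a finite quotient of a smooth space to identify the finite bimeromorphic map with the normalization.

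Your concern about irreducibility is unnecessary, as you yourself note: normalization is taken componentwise, and injectivity over $\Sigma_{n,t}$ already separates components. The paper proves connectedness of $Z_{n,t}$ only later, via $b_2$.
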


\begin{proof}
The fixed locus $F_\tau$ is a smooth complex submanifold and is preserved by the centralizer $C(\tau)$. Since the quotient map is finite, its restriction induces a finite morphism from $F_\tau^{\mathrm{rel}}/C(\tau)$ whose image is $\overline{\Sigma}_{n,t}$. For a general $x\in F_\tau^\circ$, if $\sigma x\in F_\tau^\circ$, then $\sigma^{-1}\tau\sigma$ fixes $x$. Since $\operatorname{Stab}(x)=\langle\tau\rangle$, we get $\sigma^{-1}\tau\sigma=\tau$, hence $\sigma\in C(\tau)$. Thus the map is generically one-to-one. The source is normal because it is a finite quotient of the smooth space $F_\tau^{\mathrm{rel}}$. Therefore the finite birational map is the normalization.
\end{proof}

\begin{Lem}\label{lem:low-degree-sectors}
For $k=2,3$ there is a noncanonical isomorphism
\begin{equation}\label{eq:low-degree-decomposition}
\H^k(Q_{n,t},\C)
\cong
\H^k(Y_{n,t},\C)\oplus \H^{k-2}(Z_{n,t},\C).
\end{equation}
\end{Lem}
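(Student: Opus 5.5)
The plan is to apply Saito's decomposition theorem to the projective semismall resolution $f:Q_{n,t}\to Y_{n,t}$ of \cref{lem:guan-semismall}. Since $Q_{n,t}$ is smooth, $\C_{Q_{n,t}}[2n]$ is pure and perverse. Semismallness then implies that $Rf_*\C_{Q_{n,t}}[2n]$ is a perverse sheaf, with no shifts. The decomposition theorem splits it as a direct sum of intersection complexes
\[
Rf_*\C_{Q_{n,t}}[2n]\cong\bigoplus_{\lambda}\IC_{\overline{S}_\lambda}(L_\lambda).
\]
Here $\lambda$ runs over the relevant strata, which for a semismall map are all strata since the fiber dimensions in \eqref{eq:semismall-strata} are exactly half the codimensions. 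The local system $L_\lambda$ is the monodromy representation on the top-dimensional irreducible components of $f^{-1}(s)$, $s\in S_\lambda$. By Brian\c{c}on's theorem each punctual Hilbert factor is irreducible, so the general fiber over $S_\lambda$ has a single top component. Hence each $L_\lambda$ has rank one. Since the fiber is a single component, I expect the monodromy on it to be trivial, but this needs checking.

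The open stratum contributes $\IC_{Y_{n,t}}$. By the argument of \cref{prop:geometric-model} applied to $M_{n,t}$ (a finite quotient of a manifold by complex-linear groups), this is $\C_{Y_{n,t}}[2n]$, so its hypercohomology in degree $k-2n$ gives $\H^k(Y_{n,t},\C)$. The stratum $\lambda$ has complex dimension $2(\ell-1)=2n-2(m-\ell)$. Its summand, after the shift, contributes to $\H^k(Q_{n,t})$ through the hypercohomology $\H^{k-2(m-\ell)}(\overline{S}_\lambda,\IC)$, normalized so that it starts in degree $0$. For $k=2,3$, the strata with $m-\ell\ge2$ would contribute in degrees $k-2(m-\ell)<0$, which vanish. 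So only the codimension-two transposition stratum $\Sigma_{n,t}$, with $m-\ell=1$, survives, and its contribution is $\IH^{k-2}(\overline{\Sigma}_{n,t},L)$.

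It remains to identify $\IH^{j}(\overline{\Sigma}_{n,t})$ with $\H^{j}(Z_{n,t},\C)$ for $j=0,1$. By \cref{lem:transposition-normalization}, $Z_{n,t}\cong F_\tau^{\mathrm{rel}}/C(\tau)$ is a finite quotient of a complex manifold. It is therefore a rational homology manifold, so $\IC_{Z_{n,t}}$ is the shifted constant sheaf. Intersection cohomology is invariant under normalization, since $\nu$ is finite and birational and $\nu_*\IC_{Z_{n,t}}=\IC_{\overline{\Sigma}_{n,t}}$. This gives $\IH^j(\overline{\Sigma}_{n,t})\cong \H^j(Z_{n,t},\C)$.

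The main obstacle is the triviality of $L$ on $\Sigma_{n,t}$. Transversally, the transposition stratum is locally the $A_1$ singularity $\C^2/\pm1$ resolved by $\operatorname{Hilb}^2$. The exceptional fiber is a single $\P^1$, and monodromy along $\Sigma_{n,t}$ can only permute components, of which there is one. Hence $L$ is the constant sheaf, since $H^2$ of $\P^1$ with its canonical orientation is preserved. The isomorphism is noncanonical because the decomposition theorem splitting is.
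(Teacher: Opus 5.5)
Your proposal is correct and follows the paper's proof essentially step for step: Saito's decomposition theorem for the projective semismall map $f$; the local-link argument giving $\IC_{Y_{n,t}}\cong\C_{Y_{n,t}}[2n]$; the degree count showing that strata of codimension at least four contribute only in negative degrees when $k\le 3$; triviality of the rank-one local system along $\Sigma_{n,t}$, since the generic fiber is a single $\P^1$; and the identification $\IC_{\overline{\Sigma}_{n,t}}\cong\nu_*\C_{Z_{n,t}}[2n-2]$ through the normalization. Your extra remark that monodromy can only permute top-dimensional fiber components, each with its complex orientation, usefully justifies the triviality of $L$, which the paper only asserts.
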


\begin{proof}
By \cref{lem:guan-semismall}, $f$ is projective and semismall, so Saito's analytic decomposition theorem \cite[Theorem~(0.3)]{Saito1990} applies. The open stratum contributes $\IC_{Y_{n,t}}\cong\Q_{Y_{n,t}}[2n]$. Indeed, locally $Y_{n,t}$ is a ball modulo a finite complex-linear group; as in the proof of \cref{prop:geometric-model}, its link is a finite orientation-preserving quotient of a sphere and therefore a rational homology sphere. Along $\Sigma_{n,t}$ the generic transverse singularity is of type $A_1$ and the generic fiber is a single exceptional curve $\P^1$, so the top-fiber local system is trivial of rank one.

By \cref{lem:transposition-normalization}, $Z_{n,t}$ is a finite quotient of a smooth complex manifold. The same local-link argument shows that it is a rational homology manifold. Thus $\IC_{Z_{n,t}}\cong\Q_{Z_{n,t}}[2n-2]$. The normalization map $\nu$ is finite and an isomorphism over the dense transposition stratum, hence small, so
\[
\IC_{\overline{\Sigma}_{n,t}}
\cong
\nu_*\Q_{Z_{n,t}}[2n-2].
\]
The remaining relevant strata have codimension at least four. After shifting by $[-2n]$, a support of codimension $2r$ contributes to $\H^k(Q_{n,t})$ through intersection cohomology in degree $k-2r$, which vanishes for $k\le3$ when $r\ge2$. Therefore only the open and transposition strata contribute in degrees two and three, giving \eqref{eq:low-degree-decomposition}.
\end{proof}

\begin{Lem}\label{lem:transposition-connected}
The space $Z_{n,t}$ is connected.
\end{Lem}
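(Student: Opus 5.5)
By \cref{lem:transposition-normalization}, $Z_{n,t}\cong F_\tau^{\mathrm{rel}}/C(\tau)$. It therefore suffices to show that $C(\tau)$ permutes the connected components of $F_\tau^{\mathrm{rel}}$ transitively. Better still, we aim to show that the relevant part of $F_\tau$ is already connected. The plan is to describe $F_\tau$ explicitly through the nilmanifold fibration and count its components.

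First, I would use the torus-bundle structure. $M_{n,t}$ is an $F^{n}$-bundle (fiber the torus $K_F$ with lattice $A_n\otimes\Lambda_F$) over the abelian variety $B=\ker(E^m\xrightarrow{\Sigma}E)$, and $S_m$ acts compatibly on base and fiber. The fixed locus of $\tau=(12)$ on $B$ is $\{e_1=e_2\}\cap B$. This is the image of $\{(e,e,e_3,\dots,e_m):2e+\sum_{i\ge3}e_i=0\}$, which is connected: it is isomorphic to $E^{m-2}$ via $(e,e_3,\dots,e_{m-1})$, with $e_m$ determined. So the base of $F_\tau$ is a connected abelian subvariety $B^\tau$.

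Second, over each point of $B^\tau$, the $\tau$-action on the fiber torus $K_F$ is an affine involution. Its fixed set is a union of translates of the subtorus $K_F^\tau$, indexed by a finite group of type $\H^1(\Z/2;A_n\otimes\Lambda_F)$. In coordinates, the points fixed by $\tau$ are those with $f_1-f_2$ a $2$-torsion correction compatible with the sum-zero condition, so there may be several components, differing by $f_1-f_2\in F[2]$. The points in $F_\tau^\circ$ have $f_1\neq f_2$ or $e_1\ne e_2$ away from the transposition. The honest fixed component, where the two points of $S$ coincide (in Bogomolov's picture $s_1=s_2$), is the one whose generic stabilizer is exactly $\langle\tau\rangle$. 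The other components, with $s_1$ and $s_2$ differing by a nonzero $2$-torsion translate in the fiber direction, are not pointwise fixed by $\tau$ in the actual geometry of $S^m/F$. Checking this carefully with the root versus weight lattice distinction is the main obstacle: in $M_{n,t}$ the sum-zero lattice $A_n$ can produce extra fixed components (the kernel $F[m]$ of $\rho$ interacts with $F[2]$ when $m$ is even). I would compute $\H^1(\langle\tau\rangle;A_n)$ explicitly: $A_n\cong\Z\alpha\oplus(\text{permutation part})$ with $\tau$ acting by $-1$ on $e_1-e_2$. This yields at most $F[2]$-many translates, and I would then show either that these translates lie outside the relevant union (their generic points have stabilizer exactly $\langle\tau\rangle$ only for the diagonal one), or that they are exchanged by elements of $C(\tau)$, for instance by $S_{\{3,\dots,m\}}$ combined with the monodromy of the torus bundle over $B^\tau$.

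Third, connectedness of the total space of the chosen component uses the fibration over the connected $B^\tau$. The fiber subtorus is connected, and the monodromy of the bundle (governed by the Chern class matrix $C=t(mI_n-\mathbf 1\mathbf 1^t)$ restricted to $\tau$-invariants) can only permute the translates. Hence the relevant part of $F_\tau$ consists of finitely many components permuted transitively by monodromy and $C(\tau)$. Its quotient $Z_{n,t}$ is therefore connected, being the continuous image of a single connected component.
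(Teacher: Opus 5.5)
\textbf{Verdict: genuine gap.} Showing directly, via the torus bundle $M_{n,t}\to B$, that the $\tau$-fixed locus has only one relevant $C(\tau)$-orbit of components would be a legitimate alternative to the paper's argument. As written, however, the decisive step is missing.

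\emph{The missing step.} Your second paragraph ends with ``I would then show either \dots\ or \dots'', and your third simply asserts that the components are ``permuted transitively by monodromy and $C(\tau)$''. Neither alternative is proved, and that is exactly the content of the lemma.

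\emph{Incorrect intermediate claims.}
\begin{itemize}
\item On $K_F=\ker(F^m\to F)$ the linear fixed locus of $\tau$ is exactly $\{f_1=f_2\}$. There is no $F[2]$-ambiguity; that would only occur for $m=2$.
\item Every point of $F_\tau$ lies over $B^\tau$, so ``$e_1\neq e_2$'' cannot occur on $F_\tau^\circ$.
\item You never check that fixed points exist over every point of $B^\tau$. A priori the affine involution on a fiber could be fixed-point free.
\end{itemize}

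\emph{How to close the gap along your own lines.} For $m\ge3$ one has $\H^1(\langle\tau\rangle;A_n)=0$.
\begin{itemize}
\item On $A_n$, $\ker(1+\tau)=\Z(e_1-e_2)$, and $e_1-e_2=(\tau-1)(e_2-e_3)\in\operatorname{im}(\tau-1)$. Concretely, $K_F^\tau=\{(f,f,f_3,\ldots,f_m):2f+\sum_{i\ge3}f_i=0\}$ is a connected torus parametrized by $F^{m-2}$.
\item Hence over each $b\in B^\tau$ the fixed set of $\tau$ in the fiber is either empty or a single $K_F^\tau$-torsor, so it is connected.
\item The obstruction to a fixed point over $b$ is the class of $\tau s(b)-s(b)$, for a local section $s$. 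It lies in $\ker(1+\tau)/\operatorname{im}(\tau-1)$ computed on the torus $K_F$. This group is finite, since both subgroups have Lie algebra the $(-1)$-eigenspace. So the obstruction is locally constant on the connected space $B^\tau\cong E^{m-2}$, and it vanishes because $[e]$ is fixed.
\item Therefore $F_\tau\to B^\tau$ is proper and surjective with connected fibers over a connected base. Hence $F_\tau$ itself is connected, and $Z_{n,t}\cong F_\tau^{\mathrm{rel}}/C(\tau)$ is connected.
\end{itemize}

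\emph{Comparison with the paper.} The paper reads $b_0(Z_{n,t})=1$ off the $k=2$ case of the decomposition theorem, $b_2(Q_{n,t})=b_2(Y_{n,t})+b_0(Z_{n,t})$, using $b_2(Y_{n,t})=5$ and Guan's external computation $b_2(Q_{n,t})=6$. The repaired version of your approach is elementary and does not rely on Guan's value. It also gives $F_\tau=F_{\tau,0}$ outright, so $b_2(Q_{n,t})=6$ would become an output of the decomposition rather than an input.
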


\begin{proof}
Taking $k=2$ in \eqref{eq:low-degree-decomposition} gives
\[
b_2(Q_{n,t})=b_2(Y_{n,t})+b_0(Z_{n,t}).
\]
By \cref{cor:guan-quotient-model}, $b_2(Y_{n,t})=5$, while Guan's original computation gives $b_2(Q_{n,t})=6$ for $n\ge2$ \cite{GuanII1995}. Hence $b_0(Z_{n,t})=1$.
\end{proof}

\begin{Lem}\label{lem:kernel-fixed-representation}
Let $\tau=(12)$. If $\sigma\in S_m$ acts trivially on
\[
V^\tau=\{(z_1,\ldots,z_m)\in\C^m:\sum_i z_i=0,\ z_1=z_2\},
\]
then $\sigma\in\langle\tau\rangle$.
\end{Lem}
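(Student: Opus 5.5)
The plan is a direct test-vector argument, using only the permutation action of $S_m$ on coordinates, $(\sigma z)_i=z_{\sigma^{-1}(i)}$. Throughout we have $m=n+1\geq3$, since we assume $n\geq2$. If $\sigma$ fixes a vector $z$ pointwise, then $z_{\sigma(i)}=z_i$ for all $i$. Equivalently, $\sigma$ preserves each level set $\{i: z_i=c\}$. So it suffices to exhibit a few explicit vectors in $V^\tau$ whose level sets pin $\sigma$ down to $\langle\tau\rangle$.

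For each $k\in\{3,\ldots,m\}$, which is a nonempty range because $m\geq3$, I would consider
\[
z^{(k)}=e_1+e_2-2e_k .
\]
Its coordinates sum to zero and satisfy $z_1=z_2$, so $z^{(k)}\in V^\tau$. Its level sets are $\{1,2\}$ for the value $1$, $\{k\}$ for the value $-2$, and the remaining indices for the value $0$. These values are distinct over $\C$, so if $\sigma$ fixes $z^{(k)}$ then $\sigma(k)=k$ and $\sigma(\{1,2\})=\{1,2\}$.

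Running over all $k\geq3$ shows that $\sigma$ fixes every index $3,\ldots,m$ and stabilizes $\{1,2\}$. Hence $\sigma$ is either the identity or $\tau=(12)$, which proves the claim.

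There is no real obstacle. The only point needing care is the hypothesis $m\geq3$, which guarantees that a test vector $z^{(k)}$ exists at all. For $m=2$ the space $V^\tau$ is zero and the statement would fail. Here $m\geq3$ holds because $n\geq2$.
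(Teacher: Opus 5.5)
Your argument is correct and essentially the paper's: both pin down $\sigma$ with explicit test vectors in $V^\tau$. The paper uses $(1,1,-2)$ for $m=3$ and $e_j-e_k$ ($j,k\geq3$) for $m\geq4$, while your single family $e_1+e_2-2e_k$ covers all $m\geq3$ at once. One harmless slip in your closing remark: for $m=2$ the statement does not fail, since $S_2=\langle\tau\rangle$ and the conclusion holds trivially.
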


\begin{proof}
If $m=3$, then $V^\tau$ is the line spanned by $(1,1,-2)$, and the only permutations fixing this vector are the identity and $(12)$. Assume $m\ge4$. For $j,k\ge3$, the vectors $e_j-e_k$ lie in $V^\tau$. A permutation acting trivially on all of these vectors fixes every index $j\ge3$ individually. It therefore preserves the complementary pair $\{1,2\}$, and hence is either the identity or $(12)$.
\end{proof}

\begin{Cor}\label{cor:transposition-identity-component}
Let $F_{\tau,0}$ be the connected component of $F_\tau=M_{n,t}^\tau$ containing the distinguished basepoint $[e]$. Then
\[
F_\tau^{\mathrm{rel}}=F_{\tau,0},
\qquad
Z_{n,t}\cong F_{\tau,0}/C(\tau).
\]
\end{Cor}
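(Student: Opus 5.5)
The plan is to show two things. First, every connected component of $F_\tau$ meeting $F_\tau^\circ$ is $C(\tau)$-equivalent to $F_{\tau,0}$. Second, $F_{\tau,0}$ itself meets $F_\tau^\circ$. Together with \cref{lem:transposition-connected}, this identifies $F_\tau^{\mathrm{rel}}$ with a single component, after which \cref{lem:transposition-normalization} gives the isomorphism $Z_{n,t}\cong F_{\tau,0}/C(\tau)$.

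\emph{Step one.} I would describe $F_\tau$ group-theoretically. Write $M_{n,t}=\Gamma\backslash G$ and take the $S_m$-action to be induced by automorphisms of $G$ preserving $\Gamma$, with $[e]$ fixed. A point $[g]$ is then $\tau$-fixed exactly when $\tau(g)=\gamma g$ for some $\gamma\in\Gamma$. The map $g\mapsto g^{-1}\tau(g)$ takes values in the twisted classes, so connected components of $F_\tau$ are indexed by a nonabelian cohomology set $H^1(\langle\tau\rangle,\Gamma)$. Each component is then a nilmanifold $\Gamma_\gamma\backslash G^{\tau_\gamma}$ for a twisted involution $\tau_\gamma$. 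In particular $F_{\tau,0}=\Gamma^\tau\backslash G^\tau$, whose Lie algebra has complexification two copies of $V^\tau$ together with the induced central directions.

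\emph{Step two.} I would show $F_{\tau,0}\cap F_\tau^\circ\neq\emptyset$. The locus of points in $F_{\tau,0}$ with stabilizer strictly larger than $\langle\tau\rangle$ is a union of fixed loci $F_{\tau,0}\cap M^\sigma$ for $\sigma\notin\langle\tau\rangle$. By \cref{lem:kernel-fixed-representation}, each such $\sigma$ acts nontrivially on $V^\tau$, hence nontrivially on the tangent space $T_{[e]}F_{\tau,0}\otimes\C\supset V^\tau\oplus V^\tau$. Consequently $M^\sigma\cap F_{\tau,0}$ is a proper closed analytic subset near $[e]$, and since $F_{\tau,0}$ is connected it is proper globally. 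A finite union of proper analytic subsets cannot cover $F_{\tau,0}$, so $F_{\tau,0}\subset F_\tau^{\mathrm{rel}}$.

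\emph{Step three.} I would show that $F_\tau^{\mathrm{rel}}/C(\tau)$ is connected. By \cref{lem:transposition-normalization} and \cref{lem:transposition-connected}, $Z_{n,t}\cong F_\tau^{\mathrm{rel}}/C(\tau)$ is connected. Therefore $C(\tau)$ permutes the components of $F_\tau^{\mathrm{rel}}$ transitively, and every relevant component has the form $\sigma F_{\tau,0}$ with $\sigma\in C(\tau)$.

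\emph{Main obstacle.} The difficulty is to upgrade this transitivity to the equality $F_\tau^{\mathrm{rel}}=F_{\tau,0}$, i.e.\ to show that $C(\tau)$ stabilizes $F_{\tau,0}$. Since $[e]$ is $S_m$-fixed, every $\sigma\in C(\tau)$ maps $F_\tau$ to itself and fixes $[e]$, so it maps the component through $[e]$ to itself; hence $\sigma F_{\tau,0}=F_{\tau,0}$. Combined with Step three, the orbit of $F_{\tau,0}$ under $C(\tau)$ is just $F_{\tau,0}$, so $F_\tau^{\mathrm{rel}}=F_{\tau,0}$ and the second equality follows. What genuinely needs care is justifying that $[e]$ is an $S_m$-fixed point of $M_{n,t}$ under the chosen presentation. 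I would check this from the $S_m$-equivariant lattice description in \cref{prop:guan-precursor-cover}.
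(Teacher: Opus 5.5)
Your proposal is correct and follows essentially the same route as the paper. The paper also argues that $C(\tau)$ fixes $[e]$ because the $S_m$-action comes from lattice-preserving automorphisms of $G$, so $C(\tau)$ stabilizes $F_{\tau,0}$; that \cref{lem:kernel-fixed-representation} and the tangent-space argument at $[e]$ show $F_{\tau,0}$ meets $F_\tau^\circ$; and that connectedness of $Z_{n,t}$ rules out any other component. Your Step one, the description of the components via nonabelian cohomology, is never used and can be dropped.
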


\begin{proof}
The $S_m$-action on $M_{n,t}=\Gamma\backslash G$ is induced by automorphisms of $G$ preserving the lattice, so every element of $C(\tau)$ fixes $[e]$ and preserves $F_{\tau,0}$. We claim that $F_{\tau,0}$ meets $F_\tau^\circ$. For $\sigma\notin\langle\tau\rangle$, the fixed locus $F_{\tau,0}\cap M_{n,t}^\sigma$ is a proper analytic subset. Indeed, if it were all of $F_{\tau,0}$, then the differential of $\sigma$ at $[e]$ would act trivially on the tangent space of $F_{\tau,0}$, whose standard-representation part is $V^\tau$, contradicting \cref{lem:kernel-fixed-representation}. Since there are only finitely many $\sigma$, the complement of the union of these proper analytic subsets is nonempty. Every point in that complement has stabilizer exactly $\langle\tau\rangle$.

Thus $F_{\tau,0}$ is one of the components forming $F_\tau^{\mathrm{rel}}$. Because it is $C(\tau)$-stable, its quotient is a connected component of $F_\tau^{\mathrm{rel}}/C(\tau)\cong Z_{n,t}$. By \cref{lem:transposition-connected}, the latter is connected, so no other component occurs.
\end{proof}

\begin{Lem}\label{lem:transposition-H1}
For every $n\geq2$ and every $t\neq0$,
\[
b_1(Z_{n,t})=3.
\]
\end{Lem}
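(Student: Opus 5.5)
By \cref{cor:transposition-identity-component}, $Z_{n,t}\cong F_{\tau,0}/C(\tau)$. The plan is therefore to identify $F_{\tau,0}$ as a compact nilmanifold, compute its invariant de Rham model via Nomizu's theorem, and then pass to $C(\tau)$-invariants using \cref{lem:finite-quotient}. The $S_m$-action on $M_{n,t}=\Gamma\backslash G$ comes from lattice-preserving automorphisms of $G$. Hence the component of the fixed locus through $[e]$ is $(\Gamma\cap G^\tau)\backslash G^\tau_0$, where $G^\tau$ is the fixed subgroup; it is a closed connected nilpotent subgroup with a cocompact lattice. Its Lie algebra is $\mathfrak g^\tau$, and the complexified dual $(\mathfrak g^\tau)^\vee$ is obtained by restricting the four families $x,w,u,v$ to the $\tau$-fixed subspace. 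Concretely, each copy of $V$ is replaced by $V^\tau$, of dimension $n-1$. Imposing $x_1=x_2$ and likewise for $w,u,v$, and keeping the restricted differential $dv_i=b-mx_iw_i$, gives a cdga $E_n^\tau$ that models $F_{\tau,0}$. This is legitimate because $\tau$-invariant left-invariant forms restrict to the fixed subgroup.

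Next, $C(\tau)\cong\langle\tau\rangle\times S_{m-2}$, with $\tau$ acting trivially on $F_{\tau,0}$. So
\[
\H^1(Z_{n,t},\C)\cong \H^1(E_n^\tau)^{S_{m-2}}.
\]
Since $E_n^\tau$ is generated in degree one and no degree-zero element has nonzero differential, $\H^1(E_n^\tau)=\ker(d|_{\text{deg }1})$. I would decompose $V^\tau$ as an $S_{m-2}$-representation. Writing $z_1=z_2=s$ and $z_j$ for $j\ge3$ with $2s+\sum_{j\ge3}z_j=0$, one gets
\[
V^\tau\cong \C\oplus V_{m-2},
\]
where the trivial summand is the line spanned by $(m-2,m-2,-2,\ldots,-2)$ and $V_{m-2}$ is the standard representation of $S_{m-2}$. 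Hence the $S_{m-2}$-invariant degree-one forms are exactly four lines, $\xi_x,\xi_w,\xi_u,\xi_v$, one from each family. Here $\xi_x$ is, up to scale, the restriction of $x_1+x_2$, and similarly for the others.

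The forms $\xi_x,\xi_w,\xi_u$ are closed. The key computation is that $\xi_v$ is not closed, and I would expect this to be the main obstacle. On the fixed locus,
\[
d(v_1+v_2)=2b-m(x_1w_1+x_2w_2)=2b-2m\,x_1w_1,
\]
with $b$ restricted. The claim is that this restriction is a nonzero $2$-form. One checks this by expanding in the coordinates $s$ and $z_j$: the $\xi_x\xi_w$ coefficient in $2b$ and in $2m\,x_1w_1$ should differ, and I would verify this by an explicit coefficient computation. Concretely, the restriction of $B$ to the trivial line is $2s^2+(m-2)\bigl(\tfrac{2s}{m-2}\bigr)^2$ in the diagonal coordinate, while $x_1w_1$ contributes $s^2$. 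The difference is a nonzero multiple of $\xi_x\xi_w$ for all $m\ge3$. When $m=3$ there is no $V_{m-2}$ part and only the $\xi_x\xi_w$ term appears. One must also confirm that $t\neq0$ only rescales, as in \cref{rem:same-structure-equations}.

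Granting this, the invariant closed degree-one forms are spanned by $\xi_x,\xi_w,\xi_u$. Therefore
\[
\H^1(Z_{n,t},\C)=\H^1(E_n^\tau)^{S_{m-2}}\cong\C^3,
\]
which gives $b_1(Z_{n,t})=3$, uniformly in $n\ge2$ and $t\ne0$.
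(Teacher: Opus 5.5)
Your route is the same as the paper's: $Z_{n,t}\cong F_{\tau,0}/C(\tau)$, Nomizu on the fixed nilmanifold, $V^\tau\cong\C\oplus V_{m-2}$, and four $S_{m-2}$-invariant one-forms of which three are closed. However, the step you single out as the crux is wrong as stated.

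Carry out your own coefficient computation. Evaluate the restricted bilinear form $2b-2m\,x_1w_1$ on the invariant vector $e_0=(m-2,m-2,-2,\ldots,-2)$ in both slots. The value is
\[
2\cdot 2m(m-2)-2m(m-2)^2=2m(m-2)(4-m).
\]
Equivalently, in your diagonal coordinate it is $\bigl(\tfrac{4m}{m-2}-2m\bigr)s^2=\tfrac{2m(4-m)}{m-2}s^2$. This vanishes for $m=4$, i.e.\ $n=3$. There the $\xi_x\xi_w$-coefficients of $2b$ and $2m\,x_1w_1$ coincide. So the claim that ``the difference is a nonzero multiple of $\xi_x\xi_w$ for all $m\ge3$'' is false, and your argument does not show that $\xi_v$ is non-closed when $n=3$.

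The conclusion is still true, but it comes from the other component, which you set aside. For $m\ge4$ the $S_{m-2}$-invariants of $(V^\tau)^\vee\otimes(V^\tau)^\vee$ are two-dimensional, spanned by $\xi_x\xi_w$ and the standard pairing of $V_{m-2}$. The form $d\xi_v$ has a nonzero component along the latter. On $V_{m-2}$ one has $s=0$, so $x_1w_1$ restricts to zero while $2b$ restricts to twice the standard pairing. Concretely, evaluating $2b-2m\,x_1w_1$ on $(0,0,1,-1,0,\ldots,0)$ in both slots gives $4\neq0$.

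This is exactly how the paper argues: it uses $(1,1,-2)$ only for $m=3$ and the vector $(0,0,1,-1,0,\ldots,0)$ for all $m\ge4$. With that replacement the rest of your proof goes through.
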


\begin{proof}
By \cref{cor:transposition-identity-component}, $Z_{n,t}\cong F_{\tau,0}/C(\tau)$. The centralizer is $C(\tau)=\langle\tau\rangle\times S_{m-2}$, and $\tau$ acts trivially on $F_{\tau,0}$, so the effective residual action is that of $S_{m-2}$. By \cref{lem:finite-quotient},
\[
\H^1(Z_{n,t},\C)
\cong
\H^1(F_{\tau,0},\C)^{S_{m-2}}.
\]

The identity component $F_{\tau,0}$ is a compact nilmanifold: its fixed Lie subgroup is defined by rational equations and its intersection with the lattice is cocompact. By Nomizu's theorem, its first cohomology is computed by invariant forms. The invariant coframe is obtained by restricting the four copies of $V$ in \cref{rem:same-structure-equations} to $V^\tau$. As an $S_{m-2}$-representation,
\begin{equation}\label{eq:fixed-representation}
V^\tau\cong\C\oplus V_{m-2},
\end{equation}
where $V_{m-2}$ is the standard representation of $S_{m-2}$; for $m=3$ the second summand is zero. The trivial summand is generated by
\[
e_0=(m-2,m-2,-2,\ldots,-2).
\]
Thus each family $x,w,u,v$ contributes one invariant degree-one form $x_0,w_0,u_0,v_0$. The first three are closed. In the normalized structure equations,
\[
dv_0=-m\sum_i(e_0)_i x_iw_i.
\]
This two-form is nonzero on $V^\tau$. For $m=3$, evaluate the corresponding bilinear form on the $x$- and $w$-vectors both equal to $(1,1,-2)$; for $m\ge4$, use $(0,0,1,-1,0,\ldots,0)$ in both factors. Hence $v_0$ is not closed. There are no degree-one boundaries, so the invariant closed degree-one forms are exactly $x_0,w_0,u_0$. Therefore $\H^1(Z_{n,t},\C)\cong\C^3$.
\end{proof}

\begin{Prop}\label{prop:BG-b3}
Let $X$ be a $\BG_n$-manifold with $n\geq2$. Then
\begin{equation}\label{eq:BG-b3}
b_3(X)=3.
\end{equation}
In particular, by Poincar\'e duality, $b_{4n-3}(X)=3$.
\end{Prop}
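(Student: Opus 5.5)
The plan is to compute $b_3$ first for the distinguished representative $Q_{n,t}$ and then pass to an arbitrary $\BG_n$-manifold by deformation invariance. Betti numbers are topological invariants. A $\BG_n$-manifold $X$ is by definition deformation equivalent to some $Q_{n,t}$. Deformation equivalence is generated by smooth proper families over connected bases, and by Ehresmann's theorem the fibres of such a family are diffeomorphic. Hence $b_3(X)=b_3(Q_{n,t})$, and it suffices to treat $Q_{n,t}$ for each $t\neq0$.

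For $Q_{n,t}$ we take $k=3$ in \cref{lem:low-degree-sectors}, which gives
\[
\H^3(Q_{n,t},\C)\cong\H^3(Y_{n,t},\C)\oplus\H^1(Z_{n,t},\C).
\]
By \cref{cor:guan-quotient-model}, $b_3(Y_{n,t})=0$, so the first summand vanishes. By \cref{lem:transposition-H1}, $b_1(Z_{n,t})=3$. Therefore $b_3(Q_{n,t})=3$. Since the decomposition is an isomorphism of complex vector spaces, this is a statement about complex dimension, and it agrees with the rational Betti number by universal coefficients.

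The final clause follows from Poincar\'e duality on the compact oriented manifold $X$ of real dimension $4n$: $b_{4n-3}(X)=b_3(X)=3$. The orientation comes from the complex structure.

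The main point needing care is the identification of the distinguished representative used in \cref{lem:low-degree-sectors} with the manifolds $Q$ defining the $\BG_n$ classes. We rely on the comparison quoted from \cite{Bogomolov1996} and \cite[Theorem~3.3 and Remark~3.4]{BKKY2022}, which says that Bogomolov's $Q$ and Guan's $Q_{n,t}$ are the same manifolds, or at least deformation equivalent, for the matching parameter $d=mt$. Beyond this, the argument is a direct assembly of earlier results. The substantive input, namely semismallness and the connectedness of $Z_{n,t}$ (which uses $b_2(Q_{n,t})=6$), has already been established above.
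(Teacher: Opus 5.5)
Your proposal is correct and follows the paper's proof essentially step for step. Both take the $k=3$ case of the semismall decomposition, use $b_3(Y_{n,t})=0$ from the invariant model and $b_1(Z_{n,t})=3$ from the fixed-locus computation, and then apply Ehresmann's theorem for deformation invariance. Your remarks on Poincar\'e duality and on identifying Bogomolov's $Q$ with Guan's $Q_{n,t}$ match what the paper does implicitly.
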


\begin{proof}
For every distinguished representative $Q_{n,t}$, \eqref{eq:low-degree-decomposition} gives
\[
\H^3(Q_{n,t},\C)
\cong
\H^3(Y_{n,t},\C)\oplus \H^1(Z_{n,t},\C).
\]
By \cref{cor:guan-quotient-model}, $\H^3(Y_{n,t},\C)=0$, while \cref{lem:transposition-H1} gives $\H^1(Z_{n,t},\C)\cong\C^3$. Thus $b_3(Q_{n,t})=3$ for every $t\neq0$. Betti numbers are constant in smooth proper families by Ehresmann's fibration theorem, so the same holds for every manifold deformation equivalent to one of these representatives.
\end{proof}

\begin{Rem}\label{rem:b3-kahler}
The equality $b_3=3$ already gives a topological obstruction to K\"ahlerness: every compact K\"ahler manifold has even odd Betti numbers. The same parity conclusion holds for compact complex manifolds satisfying the $\partial\bar\partial$-lemma. Thus the underlying smooth manifold of a $\BG_n$-manifold admits neither a K\"ahler complex structure nor a complex structure satisfying the $\partial\bar\partial$-lemma. The nonformality result below is stronger, since it gives an obstruction at the level of the rational homotopy type.
\end{Rem}

\subsection{Proof of the main theorem}

\begin{proof}[Proof of \cref{thm:main}]
By \cref{prop:geometric-model,thm:uniform-algebra}, the quotient $X_n$ is nonformal. The map $g:Q\to X_n$ of \eqref{eq:degree-map} has nonzero degree, and $X_n$ is a rational Poincar\'e duality space. Hence the domination theorem of Milivojevi\'c--Stelzig--Zoller \cite[Theorem~A]{MSZ2023} implies that $Q$ cannot be rationally formal: otherwise $X_n$ would be formal, contradicting
\cref{thm:uniform-algebra}.

Thus $Q$ is nonformal over $\Q$. Formality descends along characteristic-zero field extensions \cite[Remark~3.7]{MSZ2023}, so $Q$ is nonformal over $\R$ and $\C$ as well. Finally, a smooth proper family is differentiably locally trivial by Ehresmann's theorem. Formality is a homotopy invariant, so every manifold deformation equivalent to $Q$ is nonformal. Since the distinguished representative $Q=Q_{n,t}$ was arbitrary, every $\BG_n$-manifold is nonformal.
\end{proof}

\begin{Cor}\label{cor:domination}
Let $X$ be a $\BG_n$-manifold with $n\ge2$. No formal closed oriented manifold of real dimension $4n$ admits a map of nonzero degree to $X$. In particular, no compact K\"ahler manifold of real dimension $4n$ dominates $X$. Moreover, the underlying smooth manifold of $X$ admits no complex structure satisfying the $\partial\bar\partial$-lemma.
\end{Cor}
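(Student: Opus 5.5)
The corollary has three assertions, and I would derive each directly from results already established: \cref{thm:main}, \cref{prop:BG-b3}, and the domination theorem of Milivojevi\'c--Stelzig--Zoller \cite[Theorem~A]{MSZ2023}.

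\emph{Non-domination by formal manifolds.} Let $M$ be a formal closed oriented manifold of real dimension $4n$, and let $h:M\to X$ have nonzero degree. The target $X$ is a closed oriented manifold, since a complex manifold carries its canonical orientation. In particular, $X$ is a rational Poincar\'e duality space. \cite[Theorem~A]{MSZ2023} says that a nonzero-degree map onto a Poincar\'e duality space transfers formality from the source to the target. So $X$ would be formal. This contradicts \cref{thm:main}, which shows $X$ is nonformal over $\Q$ for $n\ge2$. The only real check here is that the hypotheses of \cite[Theorem~A]{MSZ2023} match. The proof of \cref{thm:main} already used that theorem with the nonzero-degree map $g:Q\to X_n$, so the formulation we need is exactly the one used there; only the roles of source and target differ.

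\emph{No K\"ahler domination.} This is the special case of the previous paragraph with $M$ compact K\"ahler, since compact K\"ahler manifolds are formal by \cite{DGMS1975}. Note that "dominates" must be read as "admits a map of nonzero degree," so the real dimension $4n$ is part of the hypothesis.

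\emph{No $\partial\bar\partial$-structure.} I would give two independent arguments. First, a compact complex manifold satisfying the $\partial\bar\partial$-lemma is formal, also by \cite{DGMS1975}, since their proof uses only the $\partial\bar\partial$-lemma. Formality depends only on the underlying smooth manifold, and that manifold is nonformal by \cref{thm:main}. Second, the $\partial\bar\partial$-lemma gives a Hodge decomposition with Hodge symmetry, which forces odd Betti numbers to be even. But $b_3(X)=3$ by \cref{prop:BG-b3}; this is the argument of \cref{rem:b3-kahler}. I expect no real obstacle in either argument; the only care needed is to state that both the formality conclusion and the Betti-number conclusion are invariants of the underlying smooth manifold, not of the chosen complex structure.
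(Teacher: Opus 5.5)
Your proposal is correct and follows the paper's proof. The paper also obtains the first assertion as the contrapositive of \cite[Theorem~A]{MSZ2023}, with the target $X$ a closed oriented manifold and hence a Poincar\'e duality space, combined with \cref{thm:main}; it derives the K\"ahler and $\partial\bar\partial$-lemma cases from \cite{DGMS1975}, and it notes the same independent parity argument via $b_3=3$ from \cref{prop:BG-b3}.
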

\begin{proof}
By \cref{thm:main}, $X$ is nonformal, while every closed oriented manifold is a rational Poincar\'e duality space. The first statement is therefore the contrapositive of the domination theorem \cite[Theorem~A]{MSZ2023}. Compact K\"ahler manifolds, and more generally compact complex manifolds satisfying the $\partial\bar\partial$-lemma, are formal \cite{DGMS1975}. The last assertion also follows independently from \cref{prop:BG-b3}.
\end{proof}

\section{Further directions}
\label{sec:questions}

The results above concern the de Rham homotopy type and the first nontrivial odd Betti number of Bogomolov--Guan manifolds. A more detailed study of their Hodge-theoretic invariants will be carried out in the companion paper \cite{RiosOrtizBGTopological}. Specifically, for the Bogomolov--Guan fourfold we compute the complete Hodge diamond, prove degeneration of the Fr\"olicher spectral sequence at the first page, and describe the contributions of the different semismall strata to Dolbeault cohomology.

\begin{Ques}
Does the Fr\"olicher spectral sequence degenerate at $E_1$ for every complex manifold in each Bogomolov--Guan deformation class of type $\BG_n$? If not, can one describe the locus in the Bogomolov--Guan deformation space where the degeneration page jumps?
\end{Ques}

This is a genuinely deformation-theoretic question. In contrast with the de Rham nonformality and Betti-number computations of the present paper, the Fr\"olicher spectral sequence depends on the complex structure and is not determined by the underlying smooth manifold. Recall that $E_1$-degeneration is stable under sufficiently small deformations: this follows from upper semicontinuity of the Dolbeault numbers together with constancy of the Betti numbers in a smooth proper family. Thus the distinguished fourfold lies in an open subset of the Bogomolov--Guan deformation space on which the Fr\"olicher spectral sequence degenerates at $E_1$. The natural problem is whether this open locus is the entire deformation space. If it is not, one may ask how its complement is situated in the period space and which geometric features of the complex structure are responsible for a jump of the degeneration page.

The four-dimensional calculation also suggests that the semismall decomposition of $f:Q_{n,t}\to Y_{n,t}$ should provide an effective way to separate the contribution of the open stratum from those of the higher collision strata. In higher dimension, the main additional difficulty is to understand the Dolbeault and Hodge-theoretic information carried by the normalizations of these strata and the corresponding local systems appearing in the decomposition theorem.

\begin{Ques}
Are Bogomolov--Guan manifolds Dolbeault formal? If not, can their nonformality be detected by explicit Dolbeault, Bott--Chern, or Aeppli Massey-type operations?
\end{Ques}

This question is independent of the de Rham nonformality established here. The techniques developed by Sferruzza--Tomassini \cite{SferruzzaTomassini2022} and Sferruzza--Verbitsky \cite{SV2026} suggest that the invariant complex geometry of the Bogomolov--Guan precursor may provide a natural starting point. A central problem is to understand whether suitable complex-homotopical obstructions can first be constructed on the invariant model and then controlled through the finite quotient and the subsequent resolution.

\begin{Ques}
To what extent is the rational or complex homotopy type of a
$\BG_n$-manifold determined by the two commensurable symmetric quotients
$X_n=N_n/S_{n+1}$ and $Y_{n,t}=M_{n,t}/S_{n+1}$?
\end{Ques}

The nonzero-degree map $Q\to X_n$ is sufficient to transfer nonformality, whereas the degree-one semismall resolution $Q_{n,t}\to Y_{n,t}$ recovers $\H^2(Q_{n,t})$ and $\H^3(Q_{n,t})$ from the open and transposition strata. Since $X_n$ and $Y_{n,t}$ have the same invariant de Rham model but different integral lattices, it would be interesting to determine which higher homotopy operations are already visible in $\cA_n$ and which require the additional topology and local systems of the collision strata of $Y_{n,t}$.

\bibliography{references}

\begin{thebibliography}{DGMS75}

\bibitem[AKV26]{AKV2026}
Leila Abubakarova, Alexandra Kuznetsova, and Misha Verbitsky.
\newblock Projective subvarieties of {B}ogomolov--{G}uan manifolds and
  quasi-diagonals in products of elliptic curves, 2026.
\newblock v1.

\bibitem[BG76]{BousfieldGugenheim1976}
A.~K. Bousfield and V.~K. A.~M. Gugenheim.
\newblock {\em On PL de Rham theory and rational homotopy type}.
\newblock Number 179 in Memoirs of the American Mathematical Society. American
  Mathematical Society, 1976.

\bibitem[BKKY22]{BKKY2022}
Fedor Bogomolov, Nikon Kurnosov, Alexandra Kuznetsova, and Egor Yasinsky.
\newblock Geometry and automorphisms of non-{K}\"ahler holomorphic symplectic
  manifolds.
\newblock {\em Int. Math. Res. Not. IMRN}, 2022(16):12302--12341, 2022.

\bibitem[Bog96]{Bogomolov1996}
Fedor~A. Bogomolov.
\newblock On {G}uan's examples of simply connected non-{K}\"ahler compact
  complex manifolds.
\newblock {\em Amer. J. Math.}, 118(5):1037--1046, 1996.

\bibitem[Bri77]{Briancon1977}
Jo\"el Brian\c{c}on.
\newblock Description de $\operatorname{Hilb}^n\mathbb{C}\{x,y\}$.
\newblock {\em Inventiones Mathematicae}, 41:45--89, 1977.

\bibitem[CFM08]{CFM2008}
Gil~R. Cavalcanti, Marisa Fern{\'a}ndez, and Vicente Mu{\~n}oz.
\newblock Symplectic resolutions, {L}efschetz property and formality.
\newblock {\em Adv. Math.}, 218(2):576--599, 2008.

\bibitem[DGMS75]{DGMS1975}
Pierre Deligne, Phillip Griffiths, John Morgan, and Dennis Sullivan.
\newblock Real homotopy theory of {K}\"ahler manifolds.
\newblock {\em Invent. Math.}, 29(3):245--274, 1975.

\bibitem[FM08]{FernandezMunoz2008}
Marisa Fern{\'a}ndez and Vicente Mu{\~n}oz.
\newblock An 8-dimensional nonformal, simply connected, symplectic manifold.
\newblock {\em Ann. of Math. (2)}, 167(3):1045--1054, 2008.

\bibitem[FOT08]{FelixOpreaTanre2008}
Yves F{\'e}lix, John Oprea, and Daniel Tanr{\'e}.
\newblock {\em Algebraic Models in Geometry}, volume~17 of {\em Oxford Graduate
  Texts in Mathematics}.
\newblock Oxford University Press, Oxford, 2008.

\bibitem[Fuj22]{FujinoMMP}
Osamu Fujino.
\newblock Minimal model program for projective morphisms between complex
  analytic spaces, 2022.

\bibitem[Gua95a]{GuanII1995}
Daniel Guan.
\newblock Examples of compact holomorphic symplectic manifolds which are not
  {K}\"ahlerian. {II}.
\newblock {\em Invent. Math.}, 121(1):135--145, 1995.

\bibitem[Gua95b]{GuanIII1995}
Daniel Guan.
\newblock Examples of compact holomorphic symplectic manifolds which are not
  {K}\"ahlerian. {III}.
\newblock {\em Internat. J. Math.}, 6(5):709--718, 1995.

\bibitem[Gua25]{Guan2025}
Daniel Guan.
\newblock Examples of compact simply connected holomorphic symplectic manifolds
  which are not formal.
\newblock {\em Axioms}, 14(3), 2025.

\bibitem[Hai98]{Haiman1998}
Mark~D. Haiman.
\newblock $t,q$-{Catalan} numbers and the {Hilbert} scheme.
\newblock {\em Discrete Mathematics}, 193(1--3):201--224, 1998.

\bibitem[Has05]{Hasegawa2005}
Keizo Hasegawa.
\newblock Complex and {K\"ahler} structures on compact solvmanifolds.
\newblock {\em Journal of Symplectic Geometry}, 3(4):749--767, 2005.

\bibitem[Ill78]{Illman1978}
S{\"o}ren Illman.
\newblock Smooth equivariant triangulations of {$G$}-manifolds for {$G$} a
  finite group.
\newblock {\em Math. Ann.}, 233(3):199--220, 1978.

\bibitem[KV19]{KurnosovVerbitsky2019}
Nikon Kurnosov and Misha Verbitsky.
\newblock Deformations and {BBF} form on non-{K}\"ahler holomorphically
  symplectic manifolds, 2019.

\bibitem[Leh99]{Lehn1999}
Manfred Lehn.
\newblock Chern classes of tautological sheaves on {Hilbert} schemes of points
  on surfaces.
\newblock {\em Inventiones Mathematicae}, 136(1):157--207, 1999.

\bibitem[Mil79]{Miller1979}
Timothy~J. Miller.
\newblock On the formality of $(k-1)$-connected compact manifolds of dimension
  less than or equal to $4k-2$.
\newblock {\em Illinois J. Math.}, 23(2):253--258, 1979.

\bibitem[MSZ23]{MSZ2023}
Aleksandar Milivojevi{\'c}, Jonas Stelzig, and Leopold Zoller.
\newblock Formality is preserved under domination, 2023.

\bibitem[Nom54]{Nomizu1954}
Katsumi Nomizu.
\newblock On the cohomology of compact homogeneous spaces of nilpotent {L}ie
  groups.
\newblock {\em Ann. of Math. (2)}, 59(3):531--538, 1954.

\bibitem[Rag72]{Raghunathan1972}
M.~S. Raghunathan.
\newblock {\em Discrete Subgroups of {L}ie Groups}, volume~68 of {\em
  Ergebnisse der Mathematik und ihrer Grenzgebiete}.
\newblock Springer-Verlag, Berlin--Heidelberg, 1972.

\bibitem[RO26]{RiosOrtizBGTopological}
{\'A}ngel~David R{\'i}os~Ortiz.
\newblock Topological invariants of bogomolov--guan fourfolds.
\newblock In preparation, 2026.

\bibitem[Sai90]{Saito1990}
Morihiko Saito.
\newblock Decomposition theorem for proper {K{\"a}hler} morphisms.
\newblock {\em Tohoku Mathematical Journal, Second Series}, 42(2):127--148,
  1990.

\bibitem[ST22]{SferruzzaTomassini2022}
Tommaso Sferruzza and Adriano Tomassini.
\newblock Dolbeault and {B}ott--{C}hern formalities: deformations and
  {$\partial\overline{\partial}$}-lemma.
\newblock {\em J. Geom. Phys.}, 175:Paper No. 104470, 19, 2022.

\bibitem[Sul77]{Sullivan1977}
Dennis Sullivan.
\newblock Infinitesimal computations in topology.
\newblock {\em Inst. Hautes {\'E}tudes Sci. Publ. Math.}, 47:269--331, 1977.

\bibitem[SV26]{SV2026}
Tommaso Sferruzza and Misha Verbitsky.
\newblock Dolbeault formality for complex nilmanifolds, 2026.

\end{thebibliography}
\bibliographystyle{alpha}

\end{document}